\date{}
\numberwithin{equation}{section}
\newtheorem{cor}{Corollary}[section]
\theoremstyle{definition}
\newtheorem{definition}{Definition}[section]
\newtheorem{rem}{Remark}[section]
\newtheorem{nota}{Notations}[section]
\newtheorem{thm}{Theorem}[section]
\newtheorem{Lemma}{Lemma}[section]
\newtheorem{prop}{Proposition}[section]
\providecommand{\U}[1]{\protect\rule{.1in}{.1in}}
\begin{document}
\begin{center}
 ~\\\vspace{1.9cm}
	{\Large  \bf{ Characterization of non-singular hyperplanes of $\mathcal{H}\left(s,q^2\right)$} in $\mathrm{P G}\left(s, q^2\right)$ }\\	
  ~\\\vspace{0.3cm}
  {\large \bf Stuti Mohanty \qquad Bikramaditya Sahu}\\

	\end{center}
     \vspace{0.3cm}
    \begin{center}
        {\bf Abstract}
    \end{center}
        In this paper, we present a combinatorial characterization of the hyperplanes associated with non-singular hermitian varieties $\mathcal H\left(s, q^2\right)$ in the projective space $\mathrm{PG}\left(s,q^2\right)$ where $s\geq3$ and $q>2$.
 By analyzing the intersection numbers of hyperplanes with points and co-dimension $2$ subspaces, we establish necessary and sufficient conditions for a hyperplane to be part of the hermitian variety. This approach extends previous characterizations of hermitian varieties based on intersection properties, providing a purely combinatorial method for identifying their hyperplanes.
 \\

	\noindent
	{\bf Keywords:} Hermitian variety, Hyperplanes, Projective Space
	\\
 
	\noindent
	{\bf MSC:} 51E20
	
	\section{Introduction}
    Projective geometry over finite fields, especially the space $\mathrm{P G}\left(s, q^2\right)$ where $q$ is a prime power, gives us an organized but complicated way to look into the deep links between geometry, combinatorics, and algebra. Within this framework, hermitian varieties emerge as important research issues, distinguished by their attractive combination of algebraic form and geometric structure. These varieties, defined by non-degenerate sesquilinear forms over the finite field $\mathrm{GF}(q^2)$, add the Frobenius automorphism $ x \mapsto x^q$ to classical quadrics. The resulting forms yield homogeneous equations of degree $q+1$, whose solutions in $\mathrm{P G}\left(s, q^2\right)$ constitute varieties with distinctive combinatorial properties and symmetries. These unique features make hermitian varieties a central topic in the study of finite classical polar spaces and their use in areas like combinatorial designs and coding theory.\\


    Segre, Tallini, and others established the framework for the study of hermitian varieties by characterizing their intersections with subspaces, singularities, and embedding features. Among the characterizations of different subspaces of the polar spaces, the characterization of the hyperplane draws the attention of many researchers. In 2010, \cite{bb3} provides a characterization of non-singular quadrics and hermitian varieties based on intersection numbers with hyperplanes and co-dimension $2$ spaces. In 2013, \cite{bb8} Butler provides a characterization of the planes that intersect a non-degenerate quadric of $\mathrm{PG}\left(4,q\right)$ in a conic. Then in 2020, two new studies (\cite{bb1}, \cite{bb2}) provide characterizations of elliptic and hyperbolic hyperplanes in $\mathrm{PG}\left(4,q\right)$. After two years, in 2022 \cite{bb6}, Winter and Schillewaert generalize the above to $\mathrm{P G}\left(2s, q\right)$. Sahu \cite{bb9} gives a characterization of the planes that meet a hyperbolic quadric of $\mathrm{PG}\left(3,q\right)$ in a conic in 2022, and then he \cite{bb6} generalizes this by characterizing parabolic hyperplanes of hyperbolic and elliptic quadrics in $\mathrm{PG}\left(2s+1,q\right)$.\\

\begin{nota}
    We use the notations listed below:
    \begin{itemize}
        \item  A non-singular hermitian variety in $\mathrm{PG}\left(s,q^2\right)$ is denoted by $\mathcal H\left(s, q^2\right)$.
   \item  The symbols $p\mathcal{H}$ or $L\mathcal{H}$ stand for a cone having a vertex of a point $p$ or a line $ L$ and a base of a non-singular hermitian variety $\mathcal{H}$.
    \end{itemize}
\end{nota}

In this research, we give a characterization of non-singular hyperplanes in $\mathrm{PG}\left(s,q^2\right)$ with respect to $\mathcal H\left(s, q^2\right)$, and our major result is as follows:

    \begin{thm} \label{main thm}
        {\em Let $\Omega$ be a non-empty family of hyperplanes of $\mathrm{PG}\left(s,q^2\right)$ that satisfies the following properties:
\begin{itemize}
    \item[$(I.)$] Every point of $\mathrm{P G}\left(s, q^2\right)$ lies in $\displaystyle\frac{q^s\left(q^{s-1}-(-1)^{s-1}\right)}{q+1}$ or $\displaystyle\frac{q^{s-1}\left(q^s-(-1)^s\right)}{q+1}$ hyperplanes of $\Omega$.
    \item [$(II.)$] Every subspace of $\mathrm{P G}\left(s, q^2\right)$ of co-dimension 2, which is contained in a hyperplane of $\Omega$, is contained in at least $q^2-q$ and at most $q^2$ hyperplanes of $\Omega$.
\end{itemize}
Then $\Omega$ is the set of hyperplanes of $\mathrm{PG}\left(s, q^2\right)$ meeting $\mathcal H\left(s, q^2\right)$ in $\mathcal H\left(s-1, q^2\right)$.}
    \end{thm}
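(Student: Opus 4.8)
The plan is to reduce the hypotheses on the family $\Omega$ to intersection data for a single point set and then invoke the point-set characterization of \cite{bb3}. First I would record the arithmetic meaning of the two numbers in $(I.)$. A short count using the unitary polarity $\perp$ of $\mathcal H(s,q^2)$ shows that the tangent hyperplanes through a point $p$ correspond bijectively to the points of $\mathcal H$ lying in the polar hyperplane $p^\perp$; subtracting this from the total number $\theta_{s-1}=(q^{2s}-1)/(q^2-1)$ of hyperplanes through $p$ gives exactly $a:=q^s(q^{s-1}-(-1)^{s-1})/(q+1)$ secant hyperplanes when $p\in\mathcal H(s,q^2)$ and exactly $b:=q^{s-1}(q^s-(-1)^s)/(q+1)$ when $p\notin\mathcal H(s,q^2)$. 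Thus the target family is precisely the set of secant hyperplanes, and $(I.)$ asserts that every point has its correct secant-degree.

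Next I would pass to the dual space $\mathrm{PG}(s,q^2)^\ast$, in which a hyperplane becomes a point, a point becomes a hyperplane, and a codimension-$2$ subspace becomes a line. Put $\mathcal A=\{H^\ast:H\in\Omega\}$ and let $\mathcal B$ be its complement. Then $(I.)$ says every hyperplane of the dual meets $\mathcal A$ in $a$ or $b$ points, hence meets $\mathcal B$ in $\theta_{s-1}-a$ or $\theta_{s-1}-b$ points; a direct computation yields $\theta_{s-1}-b=|\mathcal H(s-1,q^2)|=:h_1$ and $\theta_{s-1}-a=1+q^2|\mathcal H(s-2,q^2)|=:h_2$, the sizes of a secant and of a tangent hyperplane section of a Hermitian variety. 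Likewise $(II.)$ says every line of the dual meeting $\mathcal A$ meets it in between $q^2-q$ and $q^2$ points, equivalently every line meets $\mathcal B$ in the full $q^2+1$ points or in at most $q+1$ points; in particular $\mathcal B$ has no external line. The claim becomes: $\mathcal B$ is a non-singular Hermitian variety of $\mathrm{PG}(s,q^2)^\ast$.

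Having exactly the two Hermitian hyperplane-numbers $h_1,h_2$ forces $\mathcal B$ to be a two-character (quasi-Hermitian) set, so I would run the three standard counting identities — counting hyperplanes, incident (point, hyperplane) pairs, and incident (point-pair, hyperplane) triples — to determine the numbers of hyperplanes of each type and to verify $|\mathcal B|=|\mathcal H(s,q^2)|$. This fixes the size and the hyperplane behaviour; what remains, and what is the crux, is to supply the codimension-$2$ intersection numbers demanded by \cite{bb3}.

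Here I expect the main obstacle. After dualizing, condition $(II.)$ controls only lines of $\mathcal B$, whereas \cite{bb3} needs the numbers $|\mathcal B\cap\tau|$ for codimension-$2$ subspaces $\tau$, which have dimension $s-2>1$ once $s>3$; moreover a quasi-Hermitian set can realize the "intermediate" line sizes $2,\dots,q$ that a genuine Hermitian variety excludes, so the full strength of $(II.)$ must be used. I would handle this by induction on $s$. In the base case $s=3$ codimension-$2$ subspaces are lines, so $h_1,h_2$ together with the line control of $(II.)$ are exactly the input of \cite{bb3}, identifying $\mathcal B$ as $\mathcal H(3,q^2)$. For the step I would fix a hyperplane $\Pi$ and study the section $\mathcal B\cap\Pi$: for a codimension-$2$ subspace $\tau\subset\Pi$ the pencil of the $q^2+1$ hyperplanes through $\tau$ gives $q^2\,|\mathcal B\cap\tau| = t\,h_1+(q^2+1-t)h_2-|\mathcal B|$, where $t$ counts the $h_1$-hyperplanes of the pencil, and the line condition should pin $t$, hence $|\mathcal B\cap\tau|$, to the values realized by a Hermitian variety. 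Re-dualizing inside $\Pi$ then lets the inductive hypothesis identify $\mathcal B\cap\Pi$ as $\mathcal H(s-1,q^2)$ or, in the tangent case, as a Hermitian cone — the cone (singular) sections being the delicate case needing separate treatment. Propagating this across all hyperplanes supplies every codimension-$2$ intersection number of $\mathcal B$, after which \cite{bb3} applies, and re-dualizing the identification $\mathcal B=\mathcal H(s,q^2)$ returns the statement that $\Omega$ is the family of hyperplanes meeting $\mathcal H(s,q^2)$ in $\mathcal H(s-1,q^2)$.
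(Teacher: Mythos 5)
Your dualization is set up correctly and you have rightly identified where the difficulty lies, but the proposed resolution of that difficulty does not work as described. In the dual space, Proposition \ref{use} requires the intersection numbers of $\mathcal B$ with codimension-$2$ subspaces of $\mathrm{PG}(s,q^2)^\ast$, and these correspond to \emph{lines} of the primal space; your hypothesis $(II.)$, once dualized, constrains lines of the \emph{dual} space, i.e.\ primal codimension-$2$ subspaces. These two families coincide only for $s=3$. In your inductive step the parameter $t$ in $q^2|\mathcal B\cap\tau|=th_1+(q^2+1-t)h_2-|\mathcal B|$ counts the $h_1$-hyperplanes in the pencil of dual hyperplanes through $\tau$, i.e.\ the number of primal points on the primal line $\tau^\ast$ that lie on $q^{s-1}(q^s-(-1)^s)/(q+1)$ members of $\Omega$; condition $(II.)$ says nothing about how the two point-types distribute along a primal line when $s>3$, so ``the line condition should pin $t$'' is unsupported and the induction does not close. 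A second, smaller issue: the three standard identities for a set with the two hyperplane characters $h_1,h_2$ yield a \emph{quadratic} for $|\mathcal B|$ (equivalently for $|\Omega|$), and eliminating the spurious root is genuinely delicate --- in the paper this occupies most of Section 3, with separate divisibility arguments for $s$ even and for $s=3$, $s=5$, $s\geq 7$ odd --- so ``verify $|\mathcal B|=|\mathcal H(s,q^2)|$'' cannot simply be asserted.

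The paper sidesteps the line/codimension-$2$ mismatch by never leaving the primal space. It colours a point black if it lies on $q^{s}(q^{s-1}-(-1)^{s-1})/(q+1)$ members of $\Omega$ (these are exactly the primal points corresponding to your $h_2$-hyperplanes of the dual) and shows by counting that the black set meets every primal hyperplane in $|\mathcal H(s-1,q^2)|$ or $|p\mathcal H(s-2,q^2)|$ points, and every primal codimension-$2$ subspace $\Psi$ in one of the three Hermitian values. For the latter, summing black points over the pencil of hyperplanes through $\Psi$ expresses $b_\Psi$ in terms of the number $k$ of members of $\Omega$ containing $\Psi$; hypothesis $(II.)$ bounds $k$ directly (this is exactly the object it was designed to control), which traps $b_\Psi$ between $|\mathcal H(s-2,q^2)|$ and $|p\mathcal H(s-3,q^2)|$, and the variance identity $\sum\bigl(b_{\Psi_i}-|\mathcal H(s-2,q^2)|\bigr)\bigl(b_{\Psi_i}-|p\mathcal H(s-3,q^2)|\bigr)=0$, taken over the codimension-$2$ subspaces of a fixed hyperplane of $\Omega$, forces $b_\Psi$ to one of the endpoints. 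Proposition \ref{use} is then applied primally, with no induction on $s$. If you replace your dual object $\mathcal B$ by this primal black-point set, your hypotheses feed the characterization theorem the right kind of subspaces and the rest of your outline goes through.
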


    Quasi-quadrics are defined in \cite{bb4}. There is a similar definition of quasi-hermitian variety.
    \begin{definition}
A {\it quasi-hermitian variety} is a point set in $\mathrm{PG}\left(s, q^2\right)$ having the same intersection number with respect to hyperplanes of a non-singular hermitian variety in $\mathrm{PG}\left(s, q^2\right)$. Non-singular hermitian varieties are examples of quasi-hermitian varieties in $\mathrm{PG}\left(s, q^2\right)$. There are quasi-hermitian varieties that are not non-singular hermitian varieties. For more details, we refer to \cite{bb10}.
\end{definition}
\begin{rem} \label{remart}
There are two types of points with respect to $\Omega$ according to Property (I) of Theorem \ref{main thm}. In this paper, we first prove that the set of all first-type points, say $\mathbf{H}$, forms a quasi-hermitian variety. Only the first property is not enough to show that $\Omega$ is defined with respect to a hermitian variety. Therefore, it is necessary to put an extra condition on $\Omega$ to classify the set of non-tangent hyperplanes with respect to a non-singular hermitian variety in $\mathrm{PG}\left(s, q^2\right)$. However, we can't put the condition (similar to \cite[Theorem 1.2]{bb5}) as ``(II)$^\star$:  Every subspace of $\mathrm{PG}\left(s, q^2\right)$ of co-dimension $2$ which is contained in a hyperplane of $\Omega$ is contained in at least $q^2-q$ hyperplanes of $\Omega$". This is because there are quasi-hermitian varieties (which are not non-singular hermitian varieties) and a set of hyperplanes, satisfying both the properties of Theorem \ref{main thm}. This follows from the following description, one can refer to \cite{bb10} for all the terminologies and justifications.
 
 Consider a non-singular hermitian variety $\mathcal{H}\left(3,q^2\right)$ in $\mathrm{PG}\left(3, q^2\right)$. Now consider the set $S=\left(\mathcal{H}\left(3,q^2\right)\setminus p\mathcal{H}\left(1,q^2\right)\right)\cup p\mathcal{H}^\prime$, called the ``pivoted set"\cite{bb10}, where $\mathcal{H}^\prime$ is a quasi-hermitian variety in $\mathcal{H}\left(1,q^2\right)$. Then $S$ is a quasi-hermitian variety in $\mathcal{H}\left(3,q^2\right)$. The set of hyperplanes/planes, say $\Pi$, meeting $S$ in $\left|\mathrm{\mathcal{H}}\left(2, q^2\right)\right|$ points satisfy both properties (I) and (II)$^\star$. In particular, there are lines (or co-dimension 2 subspaces) which are contained in $0, ~q^2-q$ and $q^2+1$ planes (or hyperplanes) of $\Pi$. This allows us to redefine Property (II)$^\star$ to Property (II) given in Theorem \ref{main thm}, which differs from the main theorem of \cite[Theorem 1.2]{bb5}.
\end{rem}

We use the following version of \cite[Theorem 1.6]{bb3}, to prove Theorem \ref{main thm}.
\begin{prop} \label{use} \cite[Theorem 1.6]{bb3} 
{\em If a set of points $\mathbf{H}$ in $\mathrm{PG}\left(s,q^2\right)$, where $s\geq3$ and $q>2$ shares the same intersection numbers with hyperplanes and co-dimension 2 subspaces as a known non-singular hermitian variety $\mathcal H\left(s, q^2\right)$, then $\mathbf{H}$ must be $\mathcal H\left(s, q^2\right)$ itself.}

    
\end{prop}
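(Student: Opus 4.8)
The plan is to recover $\mathbf{H}$ from purely numerical data by combining standard counting arguments with a recognition theorem for Hermitian varieties, arguing by induction on $s$. Write $v_t=|\mathcal H(t,q^2)|$, so that the two sizes a hyperplane can cut out of $\mathcal H(s,q^2)$ are $h_1=v_{s-1}$ (a non-tangent section, a copy of $\mathcal H(s-1,q^2)$) and $h_2=1+q^2 v_{s-2}$ (a tangent section, a cone $p\mathcal H(s-2,q^2)$). By hypothesis every hyperplane meets $\mathbf{H}$ in $h_1$ or $h_2$ points; call it non-tangent or tangent accordingly. I would begin with the two standard identities obtained by double counting the incident pairs $(P,\pi)$ and the triples $(P,P',\pi)$ with $P,P'\in\mathbf{H}$ and $\pi$ a hyperplane through them. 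Comparing the first two ``moments'' of the function $\pi\mapsto|\mathbf{H}\cap\pi|$ with the numbers of hyperplanes through a point and through a line pins down $|\mathbf{H}|=v_s$ and fixes the global number of tangent and non-tangent hyperplanes, exactly as for the genuine variety.

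Next I would exploit the codimension-$2$ data through a pencil count. A codimension-$2$ subspace $\Sigma$ lies on exactly $q^2+1$ hyperplanes, and counting points of $\mathbf{H}$ inside and outside $\Sigma$ gives
$$\sum_{\pi\supset\Sigma}|\mathbf{H}\cap\pi| = |\mathbf{H}| + q^2\,|\mathbf{H}\cap\Sigma|.$$
Since each summand is $h_1$ or $h_2$, the left side is a linear function of the number $t(\Sigma)$ of tangent hyperplanes through $\Sigma$; and since $|\mathbf{H}\cap\Sigma|$ is, by hypothesis, one of the codimension-$2$ intersection values of $\mathcal H(s,q^2)$, solving the identity expresses $t(\Sigma)$ as an explicit function of $|\mathbf{H}\cap\Sigma|$ which in every case equals the count for the true variety. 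When $s=3$ the codimension-$2$ subspaces are lines, so this already forces the line spectrum $\{0,1,q+1,q^2+1\}$ with precisely the tangent/secant pattern of $\mathcal H(3,q^2)$.

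For the reconstruction I would treat $s=3$ as the base case, feeding the line data into a classical recognition theorem for Hermitian surfaces (point sets meeting every line in $0,1,q+1$ or $q^2+1$ points with the Hermitian tangent configuration are Hermitian varieties), which yields $\mathbf{H}=\mathcal H(3,q^2)$ directly. For $s>3$ I would induct: show that each non-tangent hyperplane $\pi\cong\mathrm{PG}(s-1,q^2)$ meets $\mathbf{H}$ in a set $\mathbf{H}\cap\pi$ satisfying the hypotheses within $\pi$. Its intersection numbers with the hyperplanes of $\pi$ are ambient codimension-$2$ numbers and hence controlled, and its numbers with codimension-$2$ subspaces of $\pi$ would be recovered from the pencil counts above. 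The induction then makes every non-tangent section a copy of $\mathcal H(s-1,q^2)$, and from these sections I would assemble the tangent hyperplanes into a unitary polarity $P\mapsto P^{\perp}$ and identify $\mathbf{H}$ with its set of absolute points.

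The hard part will be the transfer of intersection data \emph{downward} into a hyperplane. The hypothesis constrains only codimensions $1$ and $2$ of the ambient space, whereas the inductive step needs codimension-$2$ information inside $\pi$, that is, ambient codimension-$3$ data. Recovering this is delicate: each codimension-$3$ space lies on $q^4+q^2+1$ codimension-$2$ spaces that partition its complement, so summing the known codimension-$2$ values gives one relation, but extracting the individual values $|\mathbf{H}\cap\Sigma_3|$ requires a careful analysis of which configurations of codimension-$2$ types can occur, and it is here that the restrictions $s\geq 3$ and $q>2$ are genuinely used. Once this combinatorial heart is settled, the final passage from the set-theoretic description to the algebraic polarity is routine via the recognition theorem for $\mathcal H(s,q^2)$.
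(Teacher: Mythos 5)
First, a point of order: the paper does not prove this statement at all. Proposition \ref{use} is imported verbatim as \cite[Theorem 1.6]{bb3} (De Winter--Schillewaert) and is used as a black box in the proof of Theorem \ref{main thm}; so there is no in-paper proof to compare against, and your attempt has to be judged against the (substantially involved) argument in that source, which characterizes all finite classical polar spaces by hyperplane and codimension-$2$ intersection numbers.

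Judged on its own terms, your proposal has a genuine gap, and it is exactly the one you flag yourself. Your induction on $s$ requires the hypotheses of the proposition to hold for $\mathbf{H}\cap\pi$ inside a non-tangent hyperplane $\pi$, i.e.\ it needs the \emph{individual} intersection numbers of $\mathbf{H}$ with ambient codimension-$3$ subspaces. Nothing you set up delivers these: the pencil identity $\sum_{\pi\supset\Sigma}|\mathbf{H}\cap\pi|=|\mathbf{H}|+q^2|\mathbf{H}\cap\Sigma|$ only converts hyperplane data into codimension-$2$ data, and summing over the $q^4+q^2+1$ codimension-$2$ spaces through a fixed codimension-$3$ space $\Sigma_3$ yields a single linear relation (a first moment) for $|\mathbf{H}\cap\Sigma_3|$, while the codimension-$2$ sizes through $\Sigma_3$ can a priori mix the three admissible values in many ways. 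Without a second-moment identity localized at $\Sigma_3$ (compare how the paper's Lemma \ref{codim} needs both $\sum b_{\Psi_i}$ and $\sum b_{\Psi_i}(b_{\Psi_i}-1)$ to separate two possible values, and there only two values were in play), the individual numbers are underdetermined; writing ``once this combinatorial heart is settled'' concedes the central step rather than proving it, and this descent from codimension $2$ to lines is precisely the hard content of \cite{bb3}. Two smaller slips: for $s=3$ the forced line spectrum is $\{1,\,q+1,\,q^2+1\}$ --- a non-singular Hermitian surface has no external lines, so $0$ does not occur --- and your opening two-moment count determines $|\mathbf{H}|$ only as a root of a quadratic, so the extraneous root must be excluded explicitly. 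The recognition endgame you invoke (sets of class $[1,\,q+1,\,q^2+1]$ with respect to lines are Hermitian varieties, valid for $q>2$) is indeed the standard way such proofs conclude, which is consistent with where the hypotheses $s\geq 3$, $q>2$ enter; but as it stands your text is an outline whose pivotal step is missing, not a proof.
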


We shall first demonstrate that the collection of hyperplanes that satisfy Property (I) of Theorem \ref{main thm} constitutes a quasi-hermitian variety.

   \begin{prop} \label{quasi prop}
       {\em Let $\Omega$ be a non-empty collection of hyperplanes of $\mathrm{PG}\left(s,q^2\right)$ such that 
       \begin{itemize}
    \item[$ (I.)$] Each point of $\mathrm{P G}\left(s, q^2\right)$ is contained in $\displaystyle\frac{q^s\left(q^{s-1}-(-1)^{s-1}\right)}{q+1}$ or $\displaystyle\frac{q^{s-1}\left(q^s-(-1)^s\right)}{q+1}$ hyperplanes of $\Omega$.
\end{itemize}
Then, a quasi-hermitian variety $\mathbf {H}$ is formed by the set of points contained in $\displaystyle\frac{q^s\left(q^{s-1}-(-1)^{s-1}\right)}{q+1}$ hyperplanes of $\Omega$ and $\Omega$ is the collection of all hyperplanes that meet $\mathbf{H}$ in $\left|\mathrm{\mathcal{H}}\left(s-1, q^2\right)\right|$ points.}
   
   \end{prop}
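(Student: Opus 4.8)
The plan is to pin down the intersection numbers of $\mathbf{H}$ with hyperplanes purely by double counting, using that by Property (I) the number of hyperplanes of $\Omega$ on a point takes only the two values $t_1 := \frac{q^s(q^{s-1}-(-1)^{s-1})}{q+1}$ (the multiplicity defining $\mathbf{H}$) and $t_2 := \frac{q^{s-1}(q^s-(-1)^s)}{q+1}$; a short computation gives $t_1 - t_2 = (-1)^s q^{s-1} \neq 0$, which is used repeatedly. Write $N$ for the number of points of $\mathrm{PG}(s,q^2)$, $\theta = \frac{q^{2s}-1}{q^2-1}$ for the number of points on a hyperplane (equivalently the number of hyperplanes on a point), and $\mu = \frac{q^{2s-2}-1}{q^2-1}$ for the number of points in a subspace of codimension $2$. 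Set $h = |\mathbf{H}|$ and $n = |\Omega|$.

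The first and central step is a local count showing $\mathbf{H}$ is a two-intersection set. Fix an arbitrary hyperplane $\pi$ and count flags $(P,\sigma)$ with $P$ a point of $\pi$, $\sigma \in \Omega$, and $P \in \sigma$. Summing over the points of $\pi$ and invoking Property (I) yields $|\mathbf{H}\cap\pi|\,t_1 + (\theta - |\mathbf{H}\cap\pi|)t_2$, while summing over $\sigma \in \Omega$ and using $|\pi\cap\sigma| = \theta$ when $\sigma = \pi$ and $|\pi\cap\sigma| = \mu$ otherwise yields $\varepsilon\theta + (n-\varepsilon)\mu$, where $\varepsilon = 1$ if $\pi \in \Omega$ and $\varepsilon = 0$ otherwise. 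Equating and solving,
\[
|\mathbf{H}\cap\pi| = \frac{\varepsilon\theta + (n-\varepsilon)\mu - \theta t_2}{t_1 - t_2},
\]
so $|\mathbf{H}\cap\pi|$ depends only on whether $\pi\in\Omega$, taking the two values $v_1$ ($\varepsilon = 1$) and $v_0$ ($\varepsilon = 0$) with $v_1 - v_0 = \frac{\theta-\mu}{t_1-t_2} = \frac{q^{2s-2}}{(-1)^s q^{s-1}} = (-1)^s q^{s-1}$. As this difference is nonzero, a hyperplane lies in $\Omega$ if and only if it meets $\mathbf{H}$ in $v_1$ points; this will give the second assertion of the proposition once $v_1$ is identified with $|\mathcal{H}(s-1,q^2)|$.

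It remains to evaluate these two values, that is, to compute $n$ and $h$. For this I would add two global counts: the flag count of incident (point, $\Omega$-hyperplane) pairs gives $h t_1 + (N-h)t_2 = n\theta$, and the count of triples $(\{\sigma,\sigma'\},P)$ with distinct $\sigma,\sigma' \in \Omega$ meeting in $P$ gives $h\binom{t_1}{2} + (N-h)\binom{t_2}{2} = \binom{n}{2}\mu$. Eliminating $h$ between these two relations produces the quadratic $\mu n^2 - (\mu + \theta(t_1+t_2-1))n + N t_1 t_2 = 0$. One root is $n = N - |\mathcal{H}(s,q^2)|$, the number of non-tangent hyperplanes of a hermitian variety; substituting it back gives $h = |\mathcal{H}(s,q^2)|$, $v_1 = |\mathcal{H}(s-1,q^2)|$, and $v_0 = 1 + q^2|\mathcal{H}(s-2,q^2)|$, which are exactly the two intersection numbers of $\mathcal{H}(s,q^2)$. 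Hence $\mathbf{H}$ is a quasi-hermitian variety and $\Omega$ is precisely the set of hyperplanes meeting it in $|\mathcal{H}(s-1,q^2)|$ points.

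The main obstacle is to show that the given family $\Omega$ is forced onto this root rather than the second root of the quadratic. I expect to discard the extraneous solution by checking that it pushes $h$ (or one of $v_0,v_1$) outside the admissible range $[0,N]$ (respectively $[0,\theta]$) or makes it non-integral, and this is presumably where the hypotheses $q>2$ and $s\ge 3$ are consumed. The remaining work, namely verifying that the displayed substitutions collapse to the closed forms $|\mathcal{H}(s-1,q^2)|$ and $1+q^2|\mathcal{H}(s-2,q^2)|$, is routine manipulation of $|\mathcal{H}(k,q^2)| = \frac{(q^{k+1}-(-1)^{k+1})(q^k-(-1)^k)}{q^2-1}$ together with the identity $|\mathcal{H}(s-1,q^2)| - q^2|\mathcal{H}(s-2,q^2)| = (-1)^s(q^{s-1}-(-1)^{s-1})$ already implicit in the value of $v_1 - v_0$ computed above.
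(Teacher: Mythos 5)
Your counting framework is sound and is essentially a tidier reorganization of what the paper does: your ``local count'' over a fixed hyperplane $\pi$ (giving $|\mathbf H\cap\pi|$ as a function of $\varepsilon$ alone) packages the paper's Lemma \ref{bl-hyp} and Lemma \ref{black-hyp not} into one step, and your two global counts (flags, and pairs of $\Omega$-hyperplanes through a point) generate the same quadratic relation in $|\Omega|$ that the paper reaches as Equation \eqref{e7}. The difference $t_1-t_2=(-1)^sq^{s-1}$, the value $v_1-v_0$, and the identification of the correct root with the non-tangent-hyperplane count are all correct.

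The genuine gap is the step you yourself flag as ``the main obstacle'': discarding the second root of $\mu n^2-\bigl(\mu+\theta(t_1+t_2-1)\bigr)n+Nt_1t_2=0$. This is not a routine range/integrality check that can be deferred — it is precisely where the paper spends almost all of its effort, and the authors call the resulting Lemma ($|\Omega|=q^s(q^{s+1}+(-1)^s)/(q+1)$) the central finding of the paper. Concretely, the extraneous root is
\[
n_2=\frac{Nt_1t_2}{\mu\, n_1}=\frac{q^{s-1}\left(q^{s+1}-(-1)^s\right)\left(q^{s}-(-1)^{s}\right)}{(q+1)\left(q^{s-1}-(-1)^{s}\right)},
\]
and ruling it out requires showing that $(q+1)\left(q^{s-1}-(-1)^s\right)$ never divides the numerator; this is a nontrivial divisibility statement (the natural tool is a primitive-prime-divisor or careful gcd argument on $q^{s-1}\mp1$, with low-degree cases treated separately). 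The paper sidesteps a direct attack on $n_2$ by first proving $q^{s-1}\mid|\Omega|$ and the divisibility conditions \eqref{e4} on $b$, trapping $|\Omega_1|=|\Omega|/q^{s-1}$ in a window of length $q^2$ (Lemma \ref{hyp}), and then solving the resulting equation for the offset $t$ by a case analysis over $s$ even, $s=3$ (with $q$ even and odd treated by separate gcd computations), $s=5$, and $s\geq7$. Until you either carry out the non-integrality argument for $n_2$ in full generality or reproduce something like the paper's window-plus-case-analysis, the proof is incomplete at its decisive point. (A second, minor omission: once $n$ and $h$ are fixed, you should still record that a hyperplane \emph{not} in $\Omega$ meets $\mathbf H$ in $v_0=1+q^2|\mathcal H(s-2,q^2)|$ points, since the conclusion ``$\Omega$ is the collection of \emph{all} hyperplanes meeting $\mathbf H$ in $|\mathcal H(s-1,q^2)|$ points'' needs $v_0\neq v_1$, which your computation of $v_1-v_0$ does supply.)
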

   
   \section{Preliminaries}
   The number of points in $\mathcal H\left(s, q^2\right)$ is $\displaystyle\frac{\left(q^{s+1}+(-1)^s\right)\left(q^s-(-1)^s\right)}{q^2-1}$. A hyperplane of $\mathrm{PG} \left(s, q^2\right)$ intersects $\mathcal H\left(s, q^2\right)$  in a cone $ p\mathrm{\mathcal{H}}(s-2, q^2)$ with a point $p$ as vertex or in a non-singular hermitian variety $\mathcal H\left(s-1, q^2\right)$   called as {\it tangent} or {\it non-tangent} hyperplanes, respectively. A co-dimension $2$ subspace  intersects $\mathcal H\left(s, q^2\right)$ either in a non-singular hermitian variety $\mathcal H\left(s-2, q^2\right)$, in a cone $ p\mathcal{H}(s-3, q^2)$ 
   or $\mathrm{L \mathcal{H}}\left(s-4, q^2\right)$.\\ 

   The hyperplane intersection numbers are 
   \begin{equation*}
       m_1 = \left|\mathrm{\mathcal{H}}\left(s-1, q^2\right)\right|,
   \end{equation*}
   or
   \begin{equation*}
       m_2 = 1+ q^2\left|\mathrm{\mathcal{H}}\left(s-2, q^2\right)\right|,
   \end{equation*}
   and with co-dimension $2$ subspaces are 
    
   \begin{equation*}
      n_1 = \left|\mathrm{\mathcal{H}}\left(s-2, q^2\right)\right|,
   \end{equation*}
   \begin{equation*}
      n_2 =1+q^2 \left|\mathrm{\mathcal{H}}\left(s-3, q^2\right)\right|,
   \end{equation*}
   or
   \begin{equation*}
      n_3 =1+q^2+q^4 \left|\mathrm{\mathcal{H}}\left(s-4, q^2\right)\right|.
   \end{equation*}
  \justifying One can also refer to \cite[Section 2.2]{bb3},  for the aforementioned intersection number.
  There are in total $|\mathcal H\left(s, q^2\right)|$ tangent hyperplanes and $\displaystyle\frac{q^s\left(q^{s+1}+(-1)^s\right)}{q+1}$ non-tangent hyperplanes. For a detailed study on the hermitian variety, one can refer to \cite{bb7}.
   


   \begin{prop}\label{qua}
      {\em Let $\Omega$ be the collection of all hyperplanes that meet $\mathcal H\left(s, q^2\right)$ in $|\mathcal H\left(s-1, q^2\right)|$ points. Then the collection $\Omega$ satisfies both the Properties of Theorem \ref{main thm}.} 
   \end{prop}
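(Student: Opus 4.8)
The plan is to observe that, by definition, $\Omega$ is exactly the family of non-tangent hyperplanes of $\mathcal{H}\left(s,q^2\right)$, so both properties become counting problems: counting non-tangent hyperplanes through a fixed point for Property (I), and through a fixed co-dimension $2$ subspace for Property (II). I would handle these with the hermitian polarity $\perp$ attached to $\mathcal{H}\left(s,q^2\right)$ together with the intersection numbers $m_1,m_2,n_1,n_2,n_3$ recorded in the Preliminaries.

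For Property (I), fix a point $P$ with polar hyperplane $P^{\perp}$. Every tangent hyperplane is the polar $Q^{\perp}$ of a unique point $Q\in\mathcal{H}\left(s,q^2\right)$, and since the hermitian incidence is symmetric ($P\in Q^{\perp}\iff Q\in P^{\perp}$), the number of tangent hyperplanes through $P$ equals $\left|\mathcal{H}\left(s,q^2\right)\cap P^{\perp}\right|$. If $P\in\mathcal{H}\left(s,q^2\right)$ then $P^{\perp}$ is tangent and this count is $m_2$, whereas if $P\notin\mathcal{H}\left(s,q^2\right)$ then $P^{\perp}$ is non-tangent and it is $m_1$. Subtracting from the total number $\frac{q^{2s}-1}{q^2-1}$ of hyperplanes through $P$ and simplifying the closed forms should give $\frac{q^s\left(q^{s-1}-(-1)^{s-1}\right)}{q+1}$ in the first case and $\frac{q^{s-1}\left(q^s-(-1)^s\right)}{q+1}$ in the second, i.e. precisely the two values of Property (I).

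For Property (II), let $\Sigma$ be a co-dimension $2$ subspace; exactly $q^2+1$ hyperplanes $\pi_1,\dots,\pi_{q^2+1}$ contain it, and each point outside $\Sigma$ lies in exactly one of them. Double-counting the points of $\mathcal{H}\left(s,q^2\right)$ across this pencil yields
\[
\sum_{i=1}^{q^2+1}\left|\mathcal{H}\left(s,q^2\right)\cap\pi_i\right|=\left|\mathcal{H}\left(s,q^2\right)\right|+q^2\left|\mathcal{H}\left(s,q^2\right)\cap\Sigma\right|.
\]
Writing $u$ for the number of non-tangent hyperplanes in the pencil and $q^2+1-u$ for the tangent ones, whose intersections contribute $m_1$ and $m_2$ points respectively, this becomes one linear equation which (because $m_1\neq m_2$) determines $u$ solely from $\left|\mathcal{H}\left(s,q^2\right)\cap\Sigma\right|\in\{n_1,n_2,n_3\}$. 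Evaluating it for the three intersection types should give $u=q^2-q$ for $n_1$, $u=q^2$ for $n_2$, and $u=0$ for $n_3$. The $n_3$-subspaces then lie in no hyperplane of $\Omega$, so they are excluded from the hypothesis of Property (II), while the remaining subspaces lie in $q^2-q$ or $q^2$ hyperplanes of $\Omega$, both inside $[q^2-q,\,q^2]$, which is exactly Property (II).

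The main obstacle is the algebraic bookkeeping with the closed forms $\left|\mathcal{H}\left(k,q^2\right)\right|$: the cleanest route is to first establish the auxiliary identity $m_2-m_1=(-1)^{s-1}q^{s-1}$ (and the analogous differences $n_2-n_1$, $n_3-n_2$), after which the numerators collapse and the values $q^2-q,\,q^2,\,0$ drop out; one also needs the standard polarity facts (tangent hyperplanes correspond bijectively to points of $\mathcal{H}\left(s,q^2\right)$, and $P^{\perp}$ is tangent if and only if $P\in\mathcal{H}\left(s,q^2\right)$), which I would invoke rather than reprove.
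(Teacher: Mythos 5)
Your proposal is correct, and I verified the key computations: the pencil equation for Property (II) does yield $u=q^2-q,\ q^2,\ 0$ for $n_1,n_2,n_3$ respectively, and the subtractions $\frac{q^{2s}-1}{q^2-1}-m_2$ and $\frac{q^{2s}-1}{q^2-1}-m_1$ do collapse to the two values in Property (I). For Property (II) your argument is essentially identical to the paper's: the authors write the same double count over the pencil through $\Psi$ in the equivalent form $s_i\left(m_1-n_i\right)+\left(q^2+1-s_i\right)\left(m_2-n_i\right)=\left|\mathcal H\left(s,q^2\right)\right|-n_i$ and solve for $s_i$; you are slightly more careful than they are in noting explicitly that the $n_3$-subspaces lie in $0$ hyperplanes of $\Omega$ and are therefore excluded by the hypothesis of Property (II) rather than violating it. For Property (I) your route is genuinely different: the paper first invokes transitivity of the unitary group on points of $\mathcal H\left(s,q^2\right)$ and on points off it to argue that the count is constant in each orbit, and then determines the two constants by a global double count of incident point--hyperplane pairs against $|\Omega|=\frac{q^s\left(q^{s+1}+(-1)^s\right)}{q+1}$; you instead compute locally at each point $P$, using the bijection $Q\mapsto Q^{\perp}$ between points of $\mathcal H\left(s,q^2\right)$ and tangent hyperplanes together with the symmetry $P\in Q^{\perp}\iff Q\in P^{\perp}$ to identify the number of tangent hyperplanes through $P$ with $\left|\mathcal H\left(s,q^2\right)\cap P^{\perp}\right|\in\{m_1,m_2\}$. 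Your version avoids any appeal to group transitivity and needs no prior knowledge of $|\Omega|$, at the cost of invoking the polarity facts (tangent hyperplanes are exactly the polars of points of the variety, and $P^{\perp}$ is tangent iff $P$ lies on the variety); the paper's version is self-contained at the level of incidence counts once transitivity is granted. Both are standard and complete.
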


   \begin{proof}
   The automorphism group of the unitary polarity acts transitively on the set of points on $\mathcal H\left(s, q^2\right)$ as well as it acts transitively on the set of points of $\mathrm{PG} \left(s, q^2\right) \setminus \mathcal H\left(s, q^2\right)$. It follows that a point on and off the hermitian variety has a constant number of hyperplanes through it.
   
   Note that $|\Omega| = \displaystyle\frac{q^s\left(q^{s+1}+(-1)^s\right)}{q+1} $. We first count the total number of hyperplanes through a point of $\mathcal H\left(s, q^2\right)$. Let $h$ be the number of hyperplanes of $\Omega$ through a point of $\mathcal H\left(s, q^2\right)$.
       We count the
number of point-hyperplanes order pairs

\begin{equation*}
    \{(x,\Phi)~|~ x \in \mathcal H\left(s, q^2\right), \Phi\in\Omega~ \text{and}~x\in\Phi\},
\end{equation*}
in two different ways, we obtain
\begin{equation*}
\left|\mathrm{\mathcal{H}}\left(s, q^2\right)\right|h= |\Omega| \left|\mathrm{\mathcal{H}}\left(s-1, q^2\right)\right|,
\end{equation*}
this gives $\displaystyle h = \frac{q^s\left(q^{s-1}-(-1)^{s-1}\right)}{q+1}$.\\
 Next, we count the total number of hyperplanes through a point outside of $\mathcal H\left(s, q^2\right)$. Let $m$ be the number of hyperplanes of $\Omega$ through a point outside of $\mathcal H\left(s, q^2\right)$. We count the
number of point-hyperplanes order pairs
\begin{equation*}
    \{(x,\Phi)~|~ x \in \mathrm{P G}\left(s, q^2\right) \setminus \mathcal H\left(s, q^2\right), \Phi\in\Omega~ \text{and}~x\in\Phi\},
\end{equation*}
 in two different ways, we obtain 
\begin{equation*}
\left(\left|\mathrm{PG} \left(s, q^2\right)\right| - \left|\mathcal H\left(s, q^2\right)\right|\right)m = |\Omega| \left(\left(\frac{q^{2s}-1}{q^{2}-1}\right) - \left|\mathcal H\left(s-1, q^2\right)\right|\right),
\end{equation*}
this gives $\displaystyle m = \frac{q^{s-1}\left(q^s-(-1)^s\right)}{q+1}$. 

Let $\Psi$ be a co-dimension 2 subspace intersecting $\mathcal{H}(s,q^2)$ in $n_i$ points 
and denote $s_i$ as the corresponding number of hyperplanes of $\Omega$ containing $\Psi$. Now calculating the total points of $\mathcal H\left(s, q^2\right)$ in $\mathrm{P G}\left(s, q^2\right)$ by taking all the hyperplanes containing $\Psi$ and then counting points of $\mathcal H\left(s, q^2\right)$ in each such hyperplanes, we have the following 
\begin{equation*}
    s_i(\left|\mathcal H\left(s-1, q^2\right)\right| - n_i) + (q^2+1-s_i)(| p\mathrm{\mathcal{H}}(s-3, q^2)| - n_i) = \left|\mathcal{H}\left(s, q^2\right)\right| - n_i.
\end{equation*}
For $i = 1,2$ and $3$, solving the equation in $s_i$ for different $n_i$, we find that a co-dimension $2$ subspace of $\mathrm{P G}\left(s, q^2\right)$ is contained in $0,~ q^2-q$ or $q^2$ hyperplanes of $\Omega$.

    \end{proof}

        
\section{Proof of Proposition \ref{quasi prop}}
    A point of $\mathrm{PG} \left(s, q^2\right)$ is said to be black or white if it is contained in $\displaystyle \frac{q^s\left(q^{s-1}-(-1)^{s-1}\right)}{q+1}$ or $\displaystyle\frac{q^{s-1}\left(q^s-(-1)^s\right)}{q+1}$
hyperplanes of $\Omega$, respectively.
Let us denote $b$ and $w$ the number of black and white points of $\mathrm{P G}\left(s, q^2\right)$, respectively. Then,
\begin{equation}\label{e1}
    b+w=\frac{q^{2 s+2}-1}{q^2-1}.
\end{equation}
Counting the given set 
\begin{equation*}
    \{(x,\Phi)~|~ x \text{~is a point of }\mathrm{P G}\left(s, q^2\right), \Phi\in\Omega ~\text{and}~x\in\Phi\},
\end{equation*}
in two ways, we obtain
\begin{equation}\label{e2}
\quad b  \left[\frac{q^s\left(q^{s-1}-(-1)^{s-1}\right)}{q+1}\right]+w  \left[\frac{q^{s-1}\left(q^s-(-1)^s\right)}{q+1}\right]=\left|\Omega\right|\left(\frac{q^{2 s}-1}{q^{2}-1}\right).
\end{equation}
Putting the value of $w$ from Equation \eqref{e1} into Equation \eqref{e2}, we obtain
\begin{equation*}
b \left[\frac{q^s\left(q^{s-1}-(-1)^{s-1}\right)}{q+1}\right]+\left(\frac{q^{2 s+2}-1}{q^2-1}-b\right)\left[\frac{q^{s-1}\left(q^s-(-1)^s\right)}{q+1}\right]=|\Omega|\left(\frac{q^{2 s}-1}{q^2-1}\right).
\end{equation*}
On simplifying, we have 
\begin{equation}\label{e3}
    (-1)^s q^{s-1} b+ q^{s-1} \left(\frac{q^{2 s+2}-1}{q^2-1}\right) \left(\frac{q^s-(-1)^s}{q+1}\right)=|\Omega|\left(\frac{q^{2 s}-1}{q^2-1}\right).
\end{equation} 
Since, $\displaystyle \gcd\left(\left(\frac{q^{2 s}-1}{q^2-1}\right), q^{s-1} \right)= 1$, then from Equation \eqref{e3}, we get
\begin{equation*}
    q^{s-1}\bigg| \mid \Omega \mid.
\end{equation*}
As $\displaystyle \frac{q^{2s}-1}{q^2-1}=\frac{\left(q^s+(-1)^s\right)\left(q^s-(-1)^s\right)}{q^2-1}$, 
then again from Equation \eqref{e3}
\begin{equation}\label{e4}
\begin{cases}\displaystyle\left.\left(\frac{q^s-1}{q^2-1}\right) \right\rvert\, b, & \text { if $s$ is even },\\ \\ \displaystyle\left.\left(\frac{q^s+1}{q+1}\right) \right\rvert\, b, &\text {if $s$ is odd }\cdot \end{cases}
\end{equation} 
From Equation \eqref{e1}, $\quad \displaystyle 0 \leqslant b \leqslant \frac{q^{2 s+2}-1}{q^2-1}$. Since
\begin{equation*}   
\frac{q^{2 s+2}-1}{q^2-1} = 1+ q^2\left(\frac{q^s+1}{q+1}\right)\left(\frac{q^s-1}{q-1}\right) , 
\end{equation*}
 from Equation \eqref{e4}, we have

\begin{equation}\label{e16}
     b \leqslant \begin{cases}q^2\left(q^s+1\right)\displaystyle\left(\frac{q^s-1}{q^2-1}\right), & \text { if $s$ is even }, \\ \\ q^2\displaystyle\left(\frac{q^s-1}{q-1}\right)\left(\frac{q^s+1}{q+1}\right), &\text { if $s$ is odd }.\end{cases}
\end{equation}
Now we write Equation \eqref{e3} as
\begin{equation}\label{e5}
    b=(-1)^s\left[\frac{|\Omega|}{q^{s-1}}\left(\frac{q^{2s}-1}{q^2-1}\right)-\left(\frac{q^{2 s+2}-1}{q^2-1}\right)\left(\frac{q^s-(-1)^s}{q+1}\right)\right].
\end{equation}

For simplification, we define $|\Omega _1|:= \displaystyle \frac{|\Omega|}{q^{s-1}}$. Now we prove the following important lemma.


\begin{Lemma}\label{hyp}
   {\em  \begin{equation*}
     |\Omega _1|= \begin{cases}\displaystyle \frac{q^{s+2}-1}{q+1}-(q-1)+t, & \text { if $s$ is even } \quad \\ \\
     \displaystyle \frac{q^{s+2}+1}{q+1}-\left(q^2-q+1\right)+t , & \text { if $s$ is odd } \quad \end{cases} 
\end{equation*}
for some $1 \leqslant t \leqslant q^2$.}
\end{Lemma}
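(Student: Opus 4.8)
The plan is to read Equation~\eqref{e5} as an affine relation between $b$ and $|\Omega_1|$ and to recover $|\Omega_1|$ from the two-sided control on $b$. Put
\[
A := \frac{q^{2s}-1}{q^2-1}, \qquad C_s := \left(\frac{q^{2s+2}-1}{q^2-1}\right)\left(\frac{q^s-(-1)^s}{q+1}\right),
\]
so that Equation~\eqref{e5} becomes $b = (-1)^s\bigl(|\Omega_1|\,A - C_s\bigr)$. Hence $b$ increases with $|\Omega_1|$ when $s$ is even and decreases when $s$ is odd, which lets me convert bounds on $b$ into bounds on $|\Omega_1|$.

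First I would observe that the bound~\eqref{e16} is nothing but $b \le q^2 A$ in both parities, since $q^2 A = q^2(q^s+1)\frac{q^s-1}{q^2-1}$ for $s$ even and $q^2 A = q^2\,\frac{q^s-1}{q-1}\cdot\frac{q^s+1}{q+1}$ for $s$ odd. Combining $0 \le b \le q^2 A$ with the sign $(-1)^s$ then pins $|\Omega_1|$ inside an interval of length exactly $q^2$: one gets $\frac{C_s}{A} \le |\Omega_1| \le \frac{C_s}{A}+q^2$ for $s$ even and $\frac{C_s}{A}-q^2 \le |\Omega_1| \le \frac{C_s}{A}$ for $s$ odd.

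The decisive step is to place the endpoint $C_s/A$ relative to the integers. Factoring $q^{2s+2}-1=(q^{s+1}-1)(q^{s+1}+1)$ and $q^{2s}-1=(q^s-1)(q^s+1)$ gives
\[
\frac{C_s}{A} = \frac{q^{2s+2}-1}{(q+1)\bigl(q^s-(-1)^s\bigr)},
\]
and comparing with the candidate value $N_s := \dfrac{q^2\bigl(q^s-(-1)^s\bigr)}{q+1}$ produces the identity
\[
\frac{C_s}{A} = N_s + \frac{q-1}{q^s-(-1)^s}.
\]
Because $s \ge 3$, the correction $\frac{q-1}{q^s-(-1)^s}$ lies strictly in $(0,1)$; moreover $N_s$ is an integer, since $q \equiv -1 \pmod{q+1}$ forces $(q+1)\mid\bigl(q^s-(-1)^s\bigr)$. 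As noted before Equation~\eqref{e4} we have $q^{s-1}\mid|\Omega|$, so $|\Omega_1|$ is itself an integer. The strict inclusion $C_s/A \in (N_s, N_s+1)$ therefore rounds the interval endpoints without ambiguity: for $s$ even the lower constraint $|\Omega_1|\ge C_s/A$ gives $|\Omega_1|\in\{N_s+1,\dots,N_s+q^2\}$, while for $s$ odd the upper constraint $|\Omega_1|\le C_s/A$ gives $|\Omega_1|\in\{N_s-q^2+1,\dots,N_s\}$.

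To finish, set $t := |\Omega_1|-N_s$ when $s$ is even and $t := |\Omega_1|-(N_s-q^2)$ when $s$ is odd, so that $1\le t\le q^2$ in both cases. Substituting $N_s=\frac{q^{s+2}-q^2}{q+1}$ for $s$ even yields $|\Omega_1| = \frac{q^{s+2}-1}{q+1}-(q-1)+t$, and substituting $N_s-q^2=\frac{q^{s+2}-q^3}{q+1}$ for $s$ odd, together with the factorization $q^3+1=(q+1)(q^2-q+1)$, yields $|\Omega_1| = \frac{q^{s+2}+1}{q+1}-(q^2-q+1)+t$, which are exactly the two claimed formulas. The main obstacle is the third step: everything hinges on the exact interval width $q^2$ (from $q^2A$ matching~\eqref{e16}) together with the strict fractional placement $C_s/A = N_s + \frac{q-1}{q^s-(-1)^s}\in(N_s,N_s+1)$, and it is precisely this strict two-sided gap, combined with the integrality of $|\Omega_1|$, that forces the endpoints to be the stated integers rather than merely bounding them.
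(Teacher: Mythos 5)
Your argument is correct and is essentially the paper's own proof, only written uniformly in the parity $(-1)^s$ instead of split into two cases: both convert \eqref{e5} together with the bound \eqref{e16} (which is exactly $0\leqslant b\leqslant q^2A$) into an interval of length $q^2$ for $|\Omega_1|$, and then round using the integrality of $|\Omega_1|$ and the fact that the endpoint is a non-integer sitting strictly between $N_s$ and $N_s+1$. One sign typo: since $q^{2s}-1=\left(q^s-(-1)^s\right)\left(q^s+(-1)^s\right)$, you should get $C_s/A=\frac{q^{2s+2}-1}{(q+1)\left(q^s+(-1)^s\right)}=N_s+\frac{q-1}{q^s+(-1)^s}$ (denominators $q^s+(-1)^s$, not $q^s-(-1)^s$); the correction term still lies strictly in $(0,1)$ for $s\geqslant 3$, so the conclusion and the final formulas are unaffected.
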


\begin{proof}
We consider two cases. For $s$-even, by Equation \eqref{e16},
\begin{equation*}
    0 \leqslant b \leqslant q^2\left(q^s+1\right)\left(\frac{q^s-1}{q^2-1}\right).
\end{equation*}
From Equation \eqref{e5}, we have
\begin{equation*}
    0 \leqslant |\Omega _1|\left(\frac{q^{2 s}-1}{q^2-1}\right)-\left(\frac{q^{2 s+2}-1}{q^2-1}\right)\left(\frac{q^s-1}{q+1}\right) \leqslant q^2\left(q^s+1\right)\left(\frac{q^s-1}{q^2-1}\right).
\end{equation*}
Equivalently,

\begin{equation*}
  \frac{q^{2 s+2}-1}{(q+1)\left(q^s+1\right)} \leqslant |\Omega _1| \leqslant q^2+\frac{q^{2 s+2}-1}{(q+1)\left(q^s+1\right)}.
\end{equation*}
Now, since 
\begin{equation*}
    \frac{q^{2 s+2}-1}{(q+1)\left(q^s+1\right)} = \frac{q^{s+2}-1}{q+1}-(q-1) + \frac{q^2-1}{(q+1)\left(q^s+1\right)},
\end{equation*}
and $|\Omega _1|$ is an integer, we have
\begin{equation*}
   \frac{q^{s+2}-1}{q+1}-(q-1) < |\Omega _1| \leqslant q^2+\frac{q^{s+2}-1}{q+1}-(q-1).
\end{equation*}
From above, we can write
\begin{equation*}
    |\Omega _1|=\frac{q^{s+2}-1}{q+1}-(q-1)+t, \quad \text { for some } 1 \leqslant t \leqslant q^2.
\end{equation*}
For $s$-odd, by Equation \eqref{e16},
\begin{equation*}
    0 \leqslant b \leqslant q^2\left(q^s-1\right)\left(\frac{q^s+1}{q^2-1}\right).
\end{equation*} 
Now again from Equation \eqref{e5},
\begin{equation*}
    0 \leqslant\left(\frac{q^{2 s+2}-1}{q^2-1}\right)\left(\frac{q^s+1}{q+1}\right)-|\Omega _1|\left(\frac{q^{2 s}-1}{q^2-1}\right) \leqslant q^2\left(\frac{q^s-1}{q-1}\right)\left(\frac{q^s+1}{q+1}\right) .
\end{equation*}
Now, since
\begin{equation*}
    \frac{q^{2 s+2}-1}{(q+1)\left(q^s-1\right)} = \frac{q^{s+2}+1}{q+1}+(q-1) + \frac{q^2+1}{(q+1)\left(q^s-1\right)},
\end{equation*}
and, $|\Omega _1|$ is an integer, we have

\begin{equation*}
\frac{q^{s+2}+1}{q+1}-(q^2-q-1) < |\Omega _1| \leqslant q^2+\frac{q^{s+2}+1}{q+1}-(q^2-q+1).
\end{equation*}
Then we can write
\begin{equation*}\label{e6}
    \left|\Omega_1\right|=\frac{q^{s+2}+1}{q+1}-\left(q^2-q+1\right)+t , \quad \text { for some } 1 \leqslant t \leqslant q^2.
\end{equation*}
This proves the lemma.
\end{proof}

The following lemma tells us that in each hyperplane of $\Omega $ the number of black points is constant.


\begin{Lemma}\label{bl-hyp}
    {\em The number of black points in a hyperplane $\Phi$ of $\Omega$ is given by \begin{equation*}
    b_\Phi=\frac{(-1)^s}{q^{s-1}}\left[(|\Omega|-1)\left(\frac{q^{2 s-2}-1}{q^2-1}\right)-\left(\frac{q^{2 s}-1}{q^2-1}\right)\left(\frac{q^{s-1}\left(q^s-(-1)^s\right)}{q+1}-1\right)\right].
\end{equation*} }
\end{Lemma}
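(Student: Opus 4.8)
The plan is to run a double count \emph{inside} the fixed hyperplane $\Phi$, entirely parallel to the global count that produced Equations \eqref{e1}--\eqref{e5}, but now localised to the points lying on $\Phi$. First I would fix $\Phi\in\Omega$ and write $b_\Phi$ and $w_\Phi$ for the numbers of black and white points on $\Phi$. Since $\Phi$ is a copy of $\mathrm{PG}\left(s-1,q^2\right)$, these satisfy $b_\Phi+w_\Phi=\frac{q^{2s}-1}{q^2-1}$, the local analogue of Equation \eqref{e1}.

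Next I would count the ordered pairs $(x,\Psi)$ with $x$ a point of $\Phi$, $\Psi\in\Omega\setminus\{\Phi\}$ and $x\in\Psi$, in two ways. Counting through the points: a black point of $\Phi$ lies on $\frac{q^s\left(q^{s-1}-(-1)^{s-1}\right)}{q+1}$ hyperplanes of $\Omega$, one of which is $\Phi$ itself, so it contributes $\frac{q^s\left(q^{s-1}-(-1)^{s-1}\right)}{q+1}-1$ such pairs, and similarly a white point contributes $\frac{q^{s-1}\left(q^s-(-1)^s\right)}{q+1}-1$. Counting through the hyperplanes: each $\Psi\in\Omega$ distinct from $\Phi$ meets $\Phi$ in a co-dimension $2$ subspace of $\mathrm{PG}\left(s,q^2\right)$, that is, a copy of $\mathrm{PG}\left(s-2,q^2\right)$ with $\frac{q^{2s-2}-1}{q^2-1}$ points, independently of the hermitian variety; hence the total is $(|\Omega|-1)\frac{q^{2s-2}-1}{q^2-1}$. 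Equating the two tallies yields a single linear relation between $b_\Phi$ and $w_\Phi$.

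Finally I would eliminate $w_\Phi$ via $w_\Phi=\frac{q^{2s}-1}{q^2-1}-b_\Phi$ and solve for $b_\Phi$. The coefficient of $b_\Phi$ is exactly the difference of the two per-point multiplicities, and a short computation gives $\frac{q^s\left(q^{s-1}-(-1)^{s-1}\right)}{q+1}-\frac{q^{s-1}\left(q^s-(-1)^s\right)}{q+1}=(-1)^s q^{s-1}$. Dividing through by this quantity, and using that $\left((-1)^s\right)^{-1}=(-1)^s$, produces precisely the stated expression for $b_\Phi$.

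I do not expect a serious obstacle here: the argument is routine double counting. The only steps needing care are bookkeeping ones, namely remembering that $\Phi$ itself is one of the $\Omega$-hyperplanes through each of its points (which is what forces the two ``$-1$'' corrections and the factor $|\Omega|-1$), and the algebraic reduction of the coefficient of $b_\Phi$ to the clean value $(-1)^s q^{s-1}$. This last simplification is the crux, since it is what makes $b_\Phi$ solvable and cancels against the $q^{s-1}$ appearing in the denominator of the target formula.
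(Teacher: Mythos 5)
Your proposal is correct and is essentially identical to the paper's own argument: the paper also counts the pairs $(x,\Sigma)$ with $x\in\Sigma\cap\Phi$ and $\Sigma\in\Omega\setminus\{\Phi\}$ in two ways, obtains the same linear relation in $b_\Phi$, and solves it using the fact that the coefficient of $b_\Phi$ simplifies to $(-1)^s q^{s-1}$. No gaps.
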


\begin{proof}
Let $\Phi$ be a hyperplane in $\Omega$. Denote $b_\Phi$ as the number of black points in $\Phi$. Then there are $\displaystyle \frac{q^{2 s}-1}{q^2-1}-b_\Phi$ white points in $\Phi$.\\ 
Counting the given set 
\begin{equation*}
   \{(x, \Sigma) \mid x \text{~ is a point of~} \mathrm{PG}\left(s, q^2\right), \Sigma \in \Omega,~ \Sigma \neq \Phi ~\text{and} ~x \in \Sigma \cap\Phi\},
\end{equation*}
in two ways, we have


\begin{equation*}
    b_\Phi\left[\frac{q^s\left(q^{s-1}-(-1)^{s-1}\right)}{q+1}-1\right]+\left(\frac{q^{2 s}-1}{q^2-1}-b_\Phi\right)\left[\frac{q^{s-1}\left(q^s-(-1)^s\right)}{q+1}-1\right]=(|\Omega|-1)\left(\frac{q^{2 s-2}-1}{q^2-1}\right).
\end{equation*}
Putting the value of $|\Omega| = q^{s-1}|\Omega_1|$ from Lemma \ref{hyp} and simplifying, we find the desired expression for $b_\Phi$.
\end{proof}

Let $b_\Phi$ be the number of black points in a hyperplane of $\Omega$ given in Lemma \ref{bl-hyp}. The subsequent lemma serves as the central finding of this paper.


\begin{Lemma}
    $\displaystyle |\Omega|= \displaystyle \frac{q^s\left(q^{s+1}+(-1)^s\right)}{q+1}$.
\end{Lemma}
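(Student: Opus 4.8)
The plan is to determine $|\Omega|$ exactly by double counting the incidences between black points and hyperplanes of $\Omega$, using the crucial fact from Lemma \ref{bl-hyp} that the number $b_\Phi$ of black points contained in a hyperplane of $\Omega$ is the same for every $\Phi\in\Omega$. First I would count the ordered pairs
\begin{equation*}
\{(x,\Phi) \mid x \text{ is a black point},\ \Phi \in \Omega,\ x \in \Phi\}
\end{equation*}
in two ways. Grouping by the black point, each of the $b$ black points lies in exactly $\frac{q^s\left(q^{s-1}-(-1)^{s-1}\right)}{q+1}$ hyperplanes of $\Omega$ by the definition of a black point; grouping by the hyperplane, each $\Phi\in\Omega$ contains exactly $b_\Phi$ black points, a number independent of $\Phi$ by Lemma \ref{bl-hyp}. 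Equating the two counts gives
\begin{equation*}
b\,\frac{q^s\left(q^{s-1}-(-1)^{s-1}\right)}{q+1} = |\Omega|\, b_\Phi. \tag{$\star$}
\end{equation*}

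Next I would turn $(\star)$ into a single equation in the unknown $|\Omega|$. Both sides can be made explicit: $b$ is given as an affine-linear function of $|\Omega|$ by Equation \eqref{e5}, and $b_\Phi$ is given as an affine-linear function of $|\Omega|$ by Lemma \ref{bl-hyp}. Substituting these, the left-hand side of $(\star)$ is affine-linear in $|\Omega|$ while the right-hand side is quadratic, so after clearing denominators $(\star)$ collapses to a single quadratic equation $P(|\Omega|)=0$ whose coefficients are explicit polynomials in $q$, with the parity of $s$ entering only through the sign $(-1)^s$. I would carry out this simplification treating both parities simultaneously via $(-1)^s$.

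Finally I would solve the quadratic. One root should be exactly the asserted value $\frac{q^s\left(q^{s+1}+(-1)^s\right)}{q+1}$, which is verified by direct substitution; note that this is precisely the number of non-tangent hyperplanes recorded in the Preliminaries, so it is the root we want. The real work is eliminating the second root. For this I would appeal to Lemma \ref{hyp}: writing $|\Omega|=q^{s-1}|\Omega_1|$, that lemma confines $|\Omega_1|$ to an interval of length $q^2$ parametrized by $1\le t\le q^2$. The target value sits at an admissible parameter (one computes $t=q$ when $s$ is even and $t=q^2-q$ when $s$ is odd, both lying in $[1,q^2]$), whereas I expect the competing root of $P$ to force $t$ outside $[1,q^2]$, or to violate $|\Omega|>0$ which is excluded since $\Omega$ is non-empty; either way it is discarded and $|\Omega|$ is pinned down. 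The main obstacle is therefore twofold: performing the quadratic reduction of $(\star)$ cleanly for both parities at once, and showing rigorously that the extraneous root of $P$ falls outside the admissible window supplied by Lemma \ref{hyp}.
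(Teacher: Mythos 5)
Your skeleton coincides with the paper's: the double-counting identity $(\star)$ is exactly the paper's starting point, and combining it with Equation \eqref{e5} and Lemma \ref{bl-hyp} to obtain a single polynomial equation, then invoking Lemma \ref{hyp} to isolate the correct root, is precisely what the paper does. (The paper substitutes the parametrization $|\Omega_1|=\cdots+t$ first and works with a quadratic in $t$ rather than in $|\Omega|$, and it splits into the two parities instead of carrying $(-1)^s$ along; these are cosmetic differences.)

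The genuine gap is in your mechanism for discarding the extraneous root: it does \emph{not} fall outside the window $1\leqslant t\leqslant q^2$, and it does not make $|\Omega|$ negative. Concretely, for $s$ even the quadratic in $t$ factors as $(t-q)\left(k_{s-1}t-k_s(q-1)\right)=0$ with $k_j=\frac{q^j-1}{q-1}$, and the competing root is
\begin{equation*}
t=\frac{(q^s-1)(q-1)}{q^{s-1}-1}=q^2-q+\frac{(q-1)^2}{q^{s-1}-1}\in\left(q^2-q,\ q^2-q+1\right)\subset[1,q^2],
\end{equation*}
while for $s$ odd the competing root of Equation \eqref{e14} is $t=q+\frac{(q-1)^2}{q^{s-1}+1}\in(q,q+1)\subset[1,q^2]$. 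Both are positive, both sit comfortably inside the admissible interval, and both yield $|\Omega|>0$, so neither of your two proposed escape hatches applies. What actually eliminates them is integrality: $t$ must be an integer (implicit in Lemma \ref{hyp}, since $|\Omega_1|=|\Omega|/q^{s-1}$ was shown to be an integer via the divisibility $q^{s-1}\mid|\Omega|$), whereas the fractional parts $\frac{(q-1)^2}{q^{s-1}\mp1}$ above lie strictly in $(0,1)$ for the relevant $s$ and $q$. So your plan as written stalls exactly at the step you flag as the main obstacle; the fix is to replace ``outside $[1,q^2]$'' by ``not an integer.'' (The paper's own treatment of the odd case is more roundabout, deriving the divisibility condition \eqref{e15} and arguing separately for $s=3$, $s=5$ and $s\geqslant7$, but the underlying reason is the same non-integrality.)
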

\begin{proof}
Counting the point-hyperplane incident pairs,
\begin{equation*}
    \left\{(x, \Phi) \mid x \text{~is a black point in~} \mathrm{PG}\left(s, q^2\right), \Phi \in \Omega ~\text{and}~ x \in \Phi\right\},
\end{equation*}
in two ways, we get

\begin{equation*}
    b \frac{q^s\left(q^{s-1}-(-1)^{s-1}\right)}{q+1}=|\Omega| b_\Phi.
\end{equation*}
Putting the value of $b_\Phi$ from Lemma \ref{bl-hyp}, we obtain
\begin{equation*}
    { b } \frac{q^s\left(q^{s-1}-(-1)^{s-1}\right)}{q+1}=|\Omega| \frac{(-1)^s}{q^{s-1}}\left[(|\Omega|-1)\left(\frac{q^{2s-2}-1}{q^2-1}\right)-\left(\frac{q^{2s}-1}{q^2-1}\right)\left(\frac{q^{s-1}\left(q^s-(-1)^s\right)}{q+1}-1\right)\right],
\end{equation*}
that is,
\begin{equation*}
     \quad b=\frac{|\Omega|(q+1)}{q^s\left(q^{s-1}-(-1)^{s-1}\right)} \times\frac{(-1)^s}{q^{s-1}}\left[(|\Omega|-1)\left(\frac{q^{2 s-2}-1}{q^2-1}\right)-\left(\frac{q^{2s}-1}{q^2-1}\right)\left(\frac{q^{s-1}\left(q^s-(-1)^s\right)}{q+1}-1\right)\right].
\end{equation*}
Also, from Equation \eqref{e5}, we have
\begin{equation*}
    b=(-1)^s\left[\frac{|\Omega|}{q^{s-1}} \left(\frac{q^{2 s}-1}{q^2-1}\right)-\left(\frac{q^{2 s+2}-1}{q^2-1}\right)\left(\frac{q^s-(-1)^s}{q+1}\right)\right].
    \end{equation*}
Equating the above two values of $b$, we get
\begin{multline}\label{e7}
    \frac{|\Omega|}{q^{s-1}}\left[(\mid \Omega \mid-1)\left(\frac{q^{2 s-2}-1}{q^2-1}\right)-\left(\frac{q^{2 s}-1}{q^2-1}\right)\left(\frac{q^{s-1}\left(q^s-(-1)^s\right)}{q+1}-1\right)\right] \\
 =\frac{q^s\left(q^{s-1}-(-1)^{s-1}\right)}{q+1}\left[\frac{\mid \Omega \mid}{q^{s-1}}\left(\frac{q^{2 s}-1}{q^2-1}\right)-\left(\frac{q^{2 s+2}-1}{q^2-1}\right)\left(\frac{q^s-(-1)^s}{q+1}\right)\right].
\end{multline}
Now we consider two cases.\\
\\
\noindent\textbf{Case I: \underline{$s$-even}}\\
Recall that $\displaystyle |\Omega| = \frac{|\Omega_1|}{q^{s-1}}$. 
Now RHS of Equation \eqref{e7} is
\begin{equation*}
    \frac{q^s\left(q^{s-1}+1\right)}{q+1}\left[\left|\Omega_1\right|\left(\frac{q^{2 s}-1}{q^2-1}\right)-\left(\frac{q^{2 s+2}-1}{q^2-1}\right)\left(\frac{q^s-1}{q+1}\right)\right],
\end{equation*}
putting $\displaystyle |\Omega_1|=\frac{q^{s+2}-1}{q+1}-(q-1)+t \quad$ (from Lemma \ref{hyp}) and simplifying the RHS of Equation \eqref{e7}, we have
\begin{equation*} \frac{q^s\left(q^{s-1}+1\right)\left(q^s-1\right)}{\left(q^2-1\right)(q+1)}\left[t\left(q^s+1\right)-(q-1)\right].
\end{equation*}
Now LHS of Equation \eqref{e7} is
\begin{equation*}
 \left|\Omega_1\right|\left[\left(q^{s-1}\left|\Omega_1\right|-1\right)\left(\frac{q^{2 s-2}-1}{q^2-1}\right)-\left(\frac{q^{2 s}-1}{q^2-1}\right)\left(\frac{q^{s-1}\left(q^s-1\right)}{q+1}-1\right)\right],
   \end{equation*}
   putting $\displaystyle |\Omega_1|=\frac{q^{s+2}-1}{q+1}-(q-1)+t \quad$ (from Lemma \ref{hyp}) and simplifying the LHS of Equation \eqref{e7}, we have 
\begin{equation*}
  \frac{q^{s-1}\left(q^{s-1}+1\right)}{\left(q^2-1\right)(q+1)}\left[q^2\left(q^{s}-1\right)+t(q+1)\right]\left[t\left(q^{s-1}-1\right)+(q-1)\right].
\end{equation*}
Now, equating the LHS and RHS and simplifying, we get
\begin{multline*}
      \frac{q^{s-1}\left(q^{s-1}+1\right)}{\left(q^2-1\right)(q+1)}\left[q^2\left(q^{s}-1\right)+t(q+1)\right]\left[t\left(q^{s-1}-1\right)+(q-1)\right] = \\ \frac{q^s\left(q^{s-1}+1\right)\left(q^s-1\right)}{\left(q^2-1\right)(q+1)}\left[t\left(q^s+1\right)-(q-1)\right],
\end{multline*}
and again simplifying, we obtain
\begin{equation}\label{e10}
    q\left(q^s-1\right)\left(t-q+1\right)=\left[t\left(q^{s-1}-1\right)+(q-1)\right]t.
\end{equation}
Denote $\displaystyle k_s = \frac{q^{s}-1}{q-1}$, then we  write Equation \eqref{e10} as 
\begin{equation}\label{e11}
    qk_s\left(t-q+1\right)=t^2k_{s-1}+t.
\end{equation}
As RHS of Equation \eqref{e11} is greater than or equals to $1$, it follows that $t \geqslant q$. Clearly $t =q$ is a solution of the above equation. So we  factor Equation \eqref{e11} as
\begin{equation*}
   (t-q)(k_{s-1}t-k_s(q-1)) = 0. 
\end{equation*}
which implies $ t = q$ or $\displaystyle t = \frac{k_s(q-1)}{k_{s-1}}$. Since $\displaystyle\frac{k_s(q-1)}{k_{s-1}}$ is not an integer for $s \geqslant 4$,
we have $t =q$.\\
\\
\noindent\textbf{Case II: \underline{$s$-odd}}\\
When $s$ is odd, RHS of Equation \eqref{e7} may be written as 
\begin{equation*}
    \frac{q^s\left(q^{s-1}-1\right)}{q+1}\left[\left|\Omega_1\right|\left(\frac{q^{2 s}-1}{q^2-1}\right)-\left(\frac{q^{2 s+2}-1}{q^2-1}\right)\left(\frac{q^s+1}{q+1}\right)\right],
\end{equation*}
from Lemma \ref{hyp}, putting $\displaystyle \left|\Omega_1\right|$, we obtain the RHS of Equation \eqref{e7} as
\begin{equation*}
    \frac{q^s\left(q^{s-1}-1\right)}{q+1}\left[\left(\frac{q^{s+2}+1}{q+1}-\left(q^2-q+1\right)+t\right)\left(\frac{q^{2 s}-1}{q^2-1}\right)-\left(\frac{q^{2 s+2}-1}{q^2-1}\right)\left(\frac{q^s+1}{q+1}\right)\right],
\end{equation*}
equivalently,
\begin{equation*}
    \frac{q^s\left(q^{s-1}-1\right)\left(q^s+1\right)}{(q+1)\left(q^2-1\right)}\left[t\left(q^s-1\right)-\left(q^2\left(q^s-1\right)+(q-1)\right)\right].
\end{equation*}
Now LHS of Equation \eqref{e7} is 
\begin{equation*}
    |\Omega_1|\left[\left(q^{s-1}|\Omega_1|-1\right)\left(\frac{q^{2s-2}-1}{q^2-1}\right)-\left(\frac{q^{2 s}-1}{q^2-1}\right)\left(\frac{q^{s-1}\left(q^s+1\right)}{q+1}-1\right)\right],
\end{equation*}
from Lemma \ref{hyp}, putting $\displaystyle \left|\Omega_1\right|$, we obtain the LHS of Equation \eqref{e7} as
\begin{equation*}
    \frac{q^{s-1}\left(q^{s-1}-1\right)}{(q+1)\left(q^2-1\right)}\left[q^3\left(q^{s-1}-1\right)+t(q+1)\right]\left[t\left(q^{s-1}+1\right)-\left(q^2\left(q^{s-1}+1\right)-(q-1)\right)\right].
\end{equation*}
Now, equating the LHS and RHS and simplifying, we get
\begin{equation}\label{e14}
    t\left(q^{s-1}+1\right)\left(q^2-t\right)=(q-1)\left[q^2\left(q^{s-1}+1\right)+(q-1)(q^2-q-t)\right]. 
\end{equation}
It follows that
\begin{equation}\label{e15}
    \left(q^{s-1}+1\right)\ \bigg | (q-1)^2(q^2-q-t).
\end{equation}
Now, we determine $t$ through case-by-case observation.
\begin{itemize}
    \item[$(a.)$] For $s =3$, by Equation \eqref{e15}, $(q^2+1)$ divides $(q-1)^2(q^2-q-t)$.
    \begin{itemize}
        \item [$(i.)$] Assume first $q$ is even. \\
    Observe that gcd($(q-1)^2,q^2+1$) = gcd($2q,q^2+1$) = $1$. As $q^2+1$ is an odd integer and the only divisors of $2q$ are of the form $2^m$ for $m \geqslant 1$, this implies $q^2+1$ divides $q^2-q-t$. Since $t$ is a non-negative integer, $t = q^2-q$.

    \item[$(ii.)$] Consider now that $q$ is odd. \\
    Observe that gcd($(q-1)^2,q^2+1 $) = gcd($q-1,q^2+1 $). Since, $(q^2+1)-(q+1)(q-1) = 2$, it follows that gcd($q-1,q^2+1 $) = gcd $(q-1,2)$. As  $q$ is odd, we have gcd $(q-1,2) = 2$. Hence gcd($(q-1)^2,q^2+1 $) = $2$. Then $\gcd\left(\displaystyle\frac{(q-1)^2}{2},\frac{q^2+1}{2}\right)$ = 1. This implies by Equation \eqref{e15},
    \begin{equation*}
        \frac{q^2+1}{2} \bigg | q^2-q-t,
    \end{equation*}
      equivalently, $q^2-q-t$ = $\displaystyle r \left( \frac{q^2+1}{2}\right)$, for some $r\in \mathbb{Z}$.\\
      that is, $ t$ = $\displaystyle q^2-q-r  \left(\frac{q^2+1}{2}\right)$.\\
    As $1\leqslant t\leqslant q^2$, we have $\displaystyle 1\leqslant q^2-q-r (\frac{q^2+1}{2})\leqslant q^2$.\\
  equivalently,
      $\displaystyle  \frac{-2q}{q^2+1} \leqslant r \leqslant 
		\frac{2(q^2-q-1)}{q^2+1}$.
      
       The only possible integer value is $ r = 0$ or $1$. Then $ t = q^2-q$ or $\displaystyle t = q^2-q-\frac{q^2+1}{2}$. But $\displaystyle  t = q^2-q-\frac{q^2+1}{2}$, does not satisfy our equality in Equation \eqref{e14}. Hence $t = q^2-q$.
       \end{itemize}
    
    \item[$(b.)$] For $ s = 5$, $(q^4+1)$ divides $(q-1)^2(q^2-q-t)$. Now $(q-1)^2(q^2-q-t) \leqslant q^4+1$ for all $q$ and $1\leqslant t \leqslant q^2$. Hence $(q-1)^2(q^2-q-t) = 0$, i.e. $t = q^2-q$.

    \item[$(c.)$] For $s\geqslant 7$, $(q^{s-1}+1) $ is always larger than $(q-1)^2(q^2-q-t) $ as the degree of $(q^{s-1}+1) $ is at least $6$, and the degree of $(q-1)^2(q^2-q-t) $ is at most $4$. Then $(q-1)^2(q^2-q-t) = 0$. Hence $t = q^2-q$.
\end{itemize}
Therefore, for every $q$, $s$-odd, $t = q^2 - q$, and for all $q$, $s$-even, $t = q$.\\
Then, from Lemma \ref{hyp}, we get
\begin{equation*}
    \left|\Omega_1\right|= \frac{q\left(q^{s+1}+(-1)^s\right)}{q+1},
\end{equation*}
and hence
\begin{equation*}
   |\Omega|=\frac{q^s\left(q^{s+1}+(-1)^s\right)}{q+1}.
\end{equation*}
\end{proof}


\begin{cor} \label{black-pg}
   The number of black points in $\mathrm{P G}\left(s, q^2\right)$ is $\left|\mathrm{\mathcal{H}}\left(s, q^2\right)\right|$.
\end{cor}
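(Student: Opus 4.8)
The plan is to treat this corollary as a direct evaluation: the preceding lemma pins down $|\Omega|$, and once that value is inserted into the closed form \eqref{e5} for $b$, no further combinatorial argument is needed—only algebraic simplification matched against the cardinality of $\mathcal H\left(s,q^2\right)$ recorded in the Preliminaries.

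First I would recall from the preceding lemma that $\displaystyle |\Omega|=\frac{q^s\left(q^{s+1}+(-1)^s\right)}{q+1}$, so that $\displaystyle |\Omega_1|=\frac{|\Omega|}{q^{s-1}}=\frac{q\left(q^{s+1}+(-1)^s\right)}{q+1}$. Substituting this into the formula \eqref{e5} gives
\[
b=(-1)^s\left[\frac{q\left(q^{s+1}+(-1)^s\right)}{q+1}\cdot\frac{q^{2s}-1}{q^2-1}-\frac{q^{2s+2}-1}{q^2-1}\cdot\frac{q^s-(-1)^s}{q+1}\right].
\]
Next I would place both terms over the common denominator $(q+1)(q^2-1)$ and expand the numerator. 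Writing $\epsilon=(-1)^s$ for brevity, the two top-degree contributions $q\cdot q^{s+1}\cdot q^{2s}=q^{3s+2}$ and $q^{2s+2}\cdot q^s=q^{3s+2}$ cancel, and after collecting the surviving terms the numerator factors as $(q+1)\bigl[\epsilon q^{2s+1}-q^s(q-1)-\epsilon\bigr]$. The factor $(q+1)$ cancels against the denominator, and using $\epsilon^2=1$ one is left with
\[
b=\frac{q^{2s+1}-(-1)^s q^s(q-1)-1}{q^2-1}.
\]

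Finally I would match this against the expansion $(q^{s+1}+(-1)^s)(q^s-(-1)^s)=q^{2s+1}-(-1)^s q^s(q-1)-1$, which shows that $\displaystyle b=\frac{\left(q^{s+1}+(-1)^s\right)\left(q^s-(-1)^s\right)}{q^2-1}=\left|\mathcal H\left(s,q^2\right)\right|$, exactly the count listed in the Preliminaries. The only point demanding care is the bookkeeping of the sign $(-1)^s$ together with verifying that the leading $q^{3s+2}$ terms genuinely cancel and that $(q+1)$ factors out cleanly; beyond that the computation is routine, so there is no real obstacle—the corollary is essentially an arithmetic consequence of the main lemma.
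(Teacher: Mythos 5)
Your proposal is correct and follows exactly the paper's own route: substitute the value of $|\Omega|$ obtained in the preceding lemma into the closed form \eqref{e5} for $b$ and simplify to $\left|\mathcal H\left(s,q^2\right)\right|$. The algebraic details you supply (cancellation of the $q^{3s+2}$ terms and factoring out $(q+1)$) check out, so nothing further is needed.
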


\begin{proof}
By substituting the value of $|\Omega|$ into Equation \eqref{e3}, we get 
\begin{equation*}
    b  =(-1)^s\left[q\left(\frac{q^{s+1}+(-1)^s}{q+1}\right)\left(\frac{q^{2 s}-1}{q^2-1}\right)-\left(\frac{q^{2 s+2}-1}{q^2-1}\right)\left(\frac{q^s-(-1)^s}{q+1}\right)\right],
\end{equation*}
after simplifying, we obtain
\begin{equation*}
    b  = \left|\mathrm{\mathcal{H}}\left(s, q^2\right)\right|.
\end{equation*}
\end{proof}


\begin{cor} \label{black-hyp}
     A hyperplane of $\Omega$ contains $\left|\mathrm{\mathcal{H}}\left(s-1, q^2\right)\right|$ black points.
\end{cor}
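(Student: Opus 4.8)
The plan is to exploit the fact that Lemma~\ref{bl-hyp} already shows $b_\Phi$ to be the same constant for every hyperplane $\Phi\in\Omega$, so that it only remains to pin down this constant. The cleanest route is to reuse the incidence identity already derived in the course of computing $|\Omega|$: counting the pairs
\begin{equation*}
  \{(x,\Phi)\mid x\text{ is a black point of }\mathrm{PG}(s,q^2),\ \Phi\in\Omega\text{ and }x\in\Phi\}
\end{equation*}
in two ways yields
\begin{equation*}
  b\,\frac{q^s\left(q^{s-1}-(-1)^{s-1}\right)}{q+1} = |\Omega|\,b_\Phi,
\end{equation*}
where the left side uses that every black point lies in $\frac{q^s(q^{s-1}-(-1)^{s-1})}{q+1}$ hyperplanes of $\Omega$ by Property~(I), and the right side uses that $b_\Phi$ is constant.

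Next I would insert the two quantities now available in closed form, namely $b=\left|\mathcal{H}(s,q^2)\right|$ from Corollary~\ref{black-pg} and $|\Omega|=\frac{q^s\left(q^{s+1}+(-1)^s\right)}{q+1}$, and solve for $b_\Phi$. The common factors $q^s$ and $q+1$ cancel at once, leaving
\begin{equation*}
  b_\Phi = \left|\mathcal{H}(s,q^2)\right|\cdot\frac{q^{s-1}-(-1)^{s-1}}{q^{s+1}+(-1)^s}.
\end{equation*}
Substituting the value $\left|\mathcal{H}(s,q^2)\right| = \frac{\left(q^{s+1}+(-1)^s\right)\left(q^s-(-1)^s\right)}{q^2-1}$ recorded in the Preliminaries makes the factor $q^{s+1}+(-1)^s$ cancel, so that $b_\Phi = \frac{\left(q^s-(-1)^s\right)\left(q^{s-1}-(-1)^{s-1}\right)}{q^2-1}$.

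The last step is to recognise this as $\left|\mathcal{H}(s-1,q^2)\right|$. Specialising the size formula for a non-singular hermitian variety to dimension $s-1$ gives $\left|\mathcal{H}(s-1,q^2)\right| = \frac{\left(q^s+(-1)^{s-1}\right)\left(q^{s-1}-(-1)^{s-1}\right)}{q^2-1}$, and since $(-1)^{s-1}=-(-1)^s$ one has $q^s-(-1)^s = q^s+(-1)^{s-1}$, so the two expressions agree and the corollary follows. I expect no genuine obstacle here: the computation is short, and the only thing demanding attention is the careful bookkeeping of the alternating signs $(-1)^s$ versus $(-1)^{s-1}$. As an equivalent alternative, one could bypass the incidence count and simply substitute the value of $|\Omega|$ directly into the explicit formula of Lemma~\ref{bl-hyp}, then simplify to the same quantity.
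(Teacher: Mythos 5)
Your proof is correct. It is a minor variant of the paper's argument rather than a genuinely different one: the paper simply substitutes the now-known value $|\Omega|=\frac{q^s\left(q^{s+1}+(-1)^s\right)}{q+1}$ into the closed formula of Lemma \ref{bl-hyp} and simplifies, which is exactly the ``equivalent alternative'' you mention at the end. Your primary route instead reuses the double count $b\cdot\frac{q^s\left(q^{s-1}-(-1)^{s-1}\right)}{q+1}=|\Omega|\,b_\Phi$ together with $b=\left|\mathcal{H}\left(s,q^2\right)\right|$ from Corollary \ref{black-pg}; this is legitimate since that corollary precedes this one, and since Lemma \ref{bl-hyp} guarantees $b_\Phi$ is the same for every $\Phi\in\Omega$ (without that constancy the identity would only determine the average of $b_\Phi$ over $\Omega$, so it is right that you invoke it). The payoff of your route is that the arithmetic reduces to two cancellations and a sign bookkeeping $(-1)^{s-1}=-(-1)^s$, avoiding the expansion of the bracketed expression in Lemma \ref{bl-hyp}; the cost is a dependence on Corollary \ref{black-pg}, which the paper's direct substitution does not need.
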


\begin{proof}
Let $\Phi$ be any hyperplane of $\Omega$.
Then from Lemma \ref{bl-hyp},
\begin{equation*}
    b_\Phi=\frac{(-1)^s}{q^{s-1}}\left[\left(\frac{q^s\left(q^{s+1}+(-1)^s\right)}{q+1}-1\right)\left(\frac{q^{2 s-2}-1}{q^2-1}\right)-\left(\frac{q^{2 s}-1}{q^2-1}\right)\left(\frac{q^{s-1}\left(q^s-(-1)^s\right)}{q+1}-1\right)\right],
\end{equation*}
after simplifying, we obtain
\begin{equation*}
    b_\Phi= \left|\mathrm{\mathcal{H}}\left(s-1, q^2\right)\right|.
\end{equation*}
\end{proof}


\begin{Lemma}\label{black-hyp not}
     {\em A hyperplane not in $\Omega$ contains $\left| p\mathrm{\mathcal{H}}\left(s-2, q^2\right)\right|$
     black points.}
\end{Lemma}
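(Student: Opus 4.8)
The plan is to run the same incidence double-count that established Lemma \ref{bl-hyp}, while carefully tracking the one decisive change: now $\Phi \notin \Omega$. Let $\Phi$ be a hyperplane not in $\Omega$, and denote by $\beta$ the number of black points it contains, so that $\Phi$ carries $\frac{q^{2s}-1}{q^2-1}-\beta$ white points.

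First I would count the incidence pairs $\{(x,\Sigma)\mid x \text{ a point of } \Phi,\ \Sigma\in\Omega,\ x\in\Sigma\}$ in two ways. The key observation is that, because $\Phi\notin\Omega$, every $\Sigma\in\Omega$ is distinct from $\Phi$: hence each $\Sigma$ meets $\Phi$ in a full co-dimension $2$ subspace of $\frac{q^{2s-2}-1}{q^2-1}$ points, and every $\Omega$-hyperplane through a point of $\Phi$ is automatically different from $\Phi$. Consequently none of the ``$-1$'' corrections of Lemma \ref{bl-hyp} occur. The point side yields
\[
\beta\,\frac{q^s\left(q^{s-1}-(-1)^{s-1}\right)}{q+1}+\left(\frac{q^{2s}-1}{q^2-1}-\beta\right)\frac{q^{s-1}\left(q^s-(-1)^s\right)}{q+1},
\]
while the hyperplane side yields $|\Omega|\left(\frac{q^{2s-2}-1}{q^2-1}\right)$.

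Equating the two, I would use the identity
\[
\frac{q^s\left(q^{s-1}-(-1)^{s-1}\right)}{q+1}-\frac{q^{s-1}\left(q^s-(-1)^s\right)}{q+1}=(-1)^s q^{s-1}
\]
to solve the resulting linear equation for $\beta$, giving
\[
\beta=\frac{(-1)^s}{q^{s-1}}\left[|\Omega|\left(\frac{q^{2s-2}-1}{q^2-1}\right)-\left(\frac{q^{2s}-1}{q^2-1}\right)\frac{q^{s-1}\left(q^s-(-1)^s\right)}{q+1}\right].
\]
Finally I would substitute the value $|\Omega|=\frac{q^s\left(q^{s+1}+(-1)^s\right)}{q+1}$ established above; after clearing a common factor $q+1$ from the numerator the expression collapses to $\frac{q^{2s-1}-(-1)^s q^{s+1}+(-1)^s q^s-1}{q^2-1}$, which is exactly $1+q^2\left|\mathcal{H}\left(s-2,q^2\right)\right|=\left|p\mathcal{H}\left(s-2,q^2\right)\right|$.

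I do not expect a genuine obstacle here; the only subtle point is the bookkeeping distinguishing this lemma from Lemma \ref{bl-hyp}. Because $\Phi$ lies outside $\Omega$, each of its points sees all of its incident $\Omega$-hyperplanes and every member of $\Omega$ cuts $\Phi$ in a full co-dimension $2$ subspace, which is precisely why the two counts appear without the subtractions present in the earlier argument. Everything that remains is the routine algebraic simplification verifying that the closed form for $\beta$ coincides with $\left|p\mathcal{H}\left(s-2,q^2\right)\right|$.
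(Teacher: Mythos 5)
Your proposal is correct and uses essentially the same argument as the paper: a point--hyperplane incidence double count inside the hyperplane, combined with the already-established value of $|\Omega|$. The only cosmetic difference is that you count incidences with the hyperplanes of $\Omega$ (so no ``$-1$'' corrections are needed), whereas the paper counts incidences with the hyperplanes outside $\Omega$ other than $\Pi$ itself; the two counts are complementary and yield the same linear equation for the number of black points.
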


\begin{proof}
    Let $\Pi$ be a hyperplane not in $\Omega$. Denote $b_\Pi$  as number of black points in $\Pi$. Then there are $\displaystyle \frac{q^{2 s}-1}{q^2-1}- b_\Pi$ white points in $\Pi$.\\
Counting the given set 
\begin{equation*}
   \{(x, \Psi) \mid x \text{~ is a point of~} \mathrm{PG}\left(s, q^2\right), \Psi \not \in \Omega,~ \Psi \neq \Pi ~\text{and} ~x \in \Psi \cap\Pi\},
\end{equation*}
in two ways, we obtain
\begin{multline*}\label{e19}
    b_\Pi\left[\frac{q^{2 s}-1}{q^2-1}-\frac{q^s\left(q^{s-1}-(-1)^{s-1}\right)}{q+1}-1\right]+\left(\frac{q^{2 s}-1}{q^2-1}- b_\Pi \right)\left[\frac{q^{2 s}-1}{q^2-1}-\frac{q^{s-1}\left(q^s-(-1)^s\right)}{q+1}-1\right] \\
    =\left(\frac{q^{2 s+2}-1}{q^2-1}-|\Omega|-1\right)\left(\frac{q^{2 s-2}-1}{q^2-1}\right).
\end{multline*}
Now putting the value of $|\Omega|$ and solving the above equation for $b_\Pi$, we get 
\begin{equation*}
     b_\Pi = \left| p\mathrm{\mathcal{H}}\left(s-2, q^2\right)\right|.
 \end{equation*}
\end{proof}


\begin{Lemma}\label{codim}
    {\em Every co-dimension $2$ subspace of $\mathrm{P G}\left(s, q^2\right)$ contains $\left|\mathcal H\left(s-2, q^2\right)\right|, \left| p\mathrm{\mathcal{H}}\left(s-3, q^2\right)\right|$ or $\left|\mathrm{L \mathcal{H}}\left(s-4, q^2\right)\right|$ black points.}
\end{Lemma}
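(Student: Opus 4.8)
The plan is to mimic the double-counting used in Proposition \ref{qua} and in the preceding lemmas, now applied to an arbitrary co-dimension $2$ subspace $\Psi$. Let $a$ denote the number of hyperplanes of $\Omega$ containing $\Psi$; since the hyperplanes through a co-dimension $2$ subspace form a pencil, there are exactly $q^2+1$ of them. I would count the incident pairs $(x,\Phi)$ with $x$ a black point and $\Phi$ a hyperplane through $\Psi$ in two ways: each black point of $\Psi$ lies in all $q^2+1$ of these hyperplanes while each black point off $\Psi$ lies in exactly one; on the other side, by Corollary \ref{black-hyp} a hyperplane of $\Omega$ carries $|\mathcal{H}(s-1,q^2)|$ black points and, by Lemma \ref{black-hyp not}, a hyperplane not in $\Omega$ carries $|p\mathcal{H}(s-2,q^2)|$ black points. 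Writing $b_\Psi$ for the number of black points in $\Psi$ and using Corollary \ref{black-pg}, this gives
\[
q^2 b_\Psi + |\mathcal{H}(s,q^2)| = a\,|\mathcal{H}(s-1,q^2)| + (q^2+1-a)\,|p\mathcal{H}(s-2,q^2)|.
\]
Thus $b_\Psi$ is an affine function of $a$, with slope $\frac{|\mathcal{H}(s-1,q^2)|-|p\mathcal{H}(s-2,q^2)|}{q^2}=(-1)^s q^{s-3}$, an integer for $s\ge 3$.

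Next I would locate the admissible values of $a$. By Property (II) of Theorem \ref{main thm}, either $\Psi$ lies in no hyperplane of $\Omega$, so that $a=0$, or else $q^2-q\le a\le q^2$. Substituting the three boundary values $a=q^2-q,\;q^2,\;0$ into the displayed relation and simplifying (exactly as in the solution of the $s_i$-equation in the proof of Proposition \ref{qua}) produces precisely the three numbers $|\mathcal{H}(s-2,q^2)|$, $|p\mathcal{H}(s-3,q^2)|$ and $|L\mathcal{H}(s-4,q^2)|$ claimed in the statement, respectively. Hence these boundary values of $a$ already account for all three admissible intersection numbers.

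The main obstacle is to show that no intermediate value $a\in\{q^2-q+1,\dots,q^2-1\}$ occurs: because the affine relation is a bijection between $a$ and $b_\Psi$, any such value would produce a spurious fourth intersection number and block the later application of Proposition \ref{use}. Property (II) by itself only yields the range $[q^2-q,q^2]$, so the slope computation alone is not enough. To close this gap I would invoke the quasi-hermitian structure of the black-point set $\mathbf{H}$ already established by Corollaries \ref{black-pg}, \ref{black-hyp} and Lemma \ref{black-hyp not}, together with the description recalled in Remark \ref{remart}: for such a set the number of hyperplanes of $\Omega$ through a co-dimension $2$ subspace is confined to the discrete set $\{0,\,q^2-q,\,q^2,\,q^2+1\}$ (cf. \cite{bb10}), and the value $q^2+1$ is eliminated precisely by the upper bound $q^2$ in Property (II). This forces $a\in\{0,q^2-q,q^2\}$ and finishes the argument.

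I expect this discreteness step to be the delicate point, since it is where the hypothesis genuinely separates honest hermitian varieties from pivoted quasi-hermitian ones. A more self-contained alternative, worth attempting, is to pin down the first two moments of the distribution of $a$ over all co-dimension $2$ subspaces, by double counting the incident pairs (subspace, hyperplane of $\Omega$) and the pairs of distinct hyperplanes of $\Omega$ (each such pair meeting in a unique co-dimension $2$ subspace), and then to compare the resulting mean and variance against the bounds of Property (II); one would need the distribution to be tight enough that a convexity argument forces $a$ onto the extreme values $0,\,q^2-q,\,q^2$, and verifying this tightness is the technical heart of that route.
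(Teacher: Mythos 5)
Your opening count is exactly the paper's: summing black points over the pencil of $q^2+1$ hyperplanes through $\Psi$ gives $q^2 b_\Psi + |\mathcal{H}(s,q^2)| = a\,|\mathcal{H}(s-1,q^2)| + (q^2+1-a)\,|p\mathcal{H}(s-2,q^2)|$, and the case $a=0$ immediately yields $b_\Psi = |L\mathcal{H}(s-4,q^2)|$. You also correctly identify the real difficulty: Property (II) only confines $a$ to $\{0\}\cup\{q^2-q,\dots,q^2\}$, so the intermediate values must be excluded. But your primary way of closing that gap does not work. The claim that for a quasi-hermitian variety the number of hyperplanes of $\Omega$ through a co-dimension $2$ subspace is automatically confined to $\{0,\,q^2-q,\,q^2,\,q^2+1\}$ is not a theorem you can cite: a quasi-hermitian variety is defined only by its hyperplane intersection numbers, and the affine relation above lets $a$ (equivalently $b_\Psi$) range over many integer values a priori. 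Indeed, the whole point of Remark \ref{remart} is that hyperplane-level information does not control co-dimension $2$ behaviour --- the pivoted sets there are exhibited as quasi-hermitian varieties with anomalous co-dimension $2$ counts --- so the ``discreteness step'' is precisely what remains to be proven, and you have asserted it rather than proven it.

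Your fallback --- a two-moment, convexity-style argument --- is the right idea and is essentially what the paper does, but the detail you are missing is where to run it. Globally over all co-dimension $2$ subspaces there are three admissible values of $b_\Psi$, and two moments cannot force a distribution onto a three-point support. The paper instead fixes a hyperplane $\Phi\in\Omega$ and runs the computation only over the co-dimension $2$ subspaces $\Psi_i$ contained in $\Phi$: each such $\Psi_i$ lies in at least one member of $\Omega$, so $a\neq 0$, and Property (II) together with the affine relation sandwiches $b_{\Psi_i}$ between $n_1=|\mathcal{H}(s-2,q^2)|$ and $n_2=|p\mathcal{H}(s-3,q^2)|$. Counting incident pairs and triples of black points of $\Phi$ with these subspaces (using Corollary \ref{black-hyp}) yields $\sum_i \left(b_{\Psi_i}-n_1\right)\left(b_{\Psi_i}-n_2\right)=0$; since every summand is non-positive by the sandwich, every summand vanishes and $b_{\Psi_i}\in\{n_1,n_2\}$. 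That restriction to the subspaces inside a single hyperplane of $\Omega$ is the device that reduces the support to two values and makes the second-moment identity conclusive; it is the step absent from your proposal.
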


\begin{proof}
    Let $\Psi$ be a co-dimension 2 subspace of $\mathrm{P G}\left(s, q^2\right)$ and denote $b_\Psi$ as the number of black points in $\Psi$.\\
Let us denote $k$ as the number of hyperplanes of $\Omega$ containing $\Psi$. Now, counting the total number of black points in $\mathrm{PG}  (s,q^2)$ by considering all the hyperplanes through $\Psi$
    \begin{equation*}
      k \left|\mathrm{\mathcal{H}}\left(s-1, q^2\right)\right|+\left(q^2+1-k\right) \left|p\mathrm{\mathcal{H}}\left(s-2, q^2\right)\right| -q^2 b_\Psi=\left|\mathrm{\mathcal{H}}\left(s, q^2\right)\right|.
    \end{equation*}
    By simplifying, we have 
\begin{equation}\label{e20}
        (-1)^sq^{s-1}k+\left(q^2+1\right) \left|p\mathrm{\mathcal{H}}\left(s-2, q^2\right)\right| =\left|\mathrm{\mathcal{H}}\left(s, q^2\right)\right| +q^2 b_\Psi.
    \end{equation}
    For $k=0$, $ b_\Psi=\left|\mathrm{L \mathcal{H}}\left(s-4, q^2\right)\right|$.
    If $k \neq 0$, then by Theorem 1.1(II), $q^2-q \leqslant k \leqslant q^2.$\\ Then from Equation \eqref{e20}, we have $\left|\mathrm{\mathcal{H}}\left(s-2, q^2\right)\right| \leqslant b_\Psi \leqslant \left|p\mathrm{\mathcal{H}}\left(s-3, q^2\right)\right|$. 
    Now we show that $b_\Psi = \left|\mathrm{\mathcal{H}}\left(s-2, q^2\right)\right|$ or $b_\Psi = \left|p\mathrm{\mathcal{H}}\left(s-3, q^2\right)\right|$.\\

     Consider a fixed hyperplane $\Phi \in \Omega$. Let $b_{\Psi_i}$ represent the number of black points in $\Psi_i$, where $\Psi_i$ are co-dimension $2$ space contained in $\Phi$. Now counting the following incident pairs and triples,
     \begin{equation*}
         \{(x,\Psi)~|~ x \text{~is a black point in $\Phi$}, \Psi~ \text{is a co-dimension $2$ subspace of} ~\Phi ~ \text{and} ~ x \in \Psi\}
     \end{equation*}
     and
     \begin{equation*}
         \{(x,y, \Psi)~|~ x, y \text{~ black points in }\Phi, \Psi \in \Phi~\text{and}~ x \neq y \in \Psi \},
     \end{equation*}
     we obtain
     \begin{equation*}
         \sum b_{\Psi_i}= \left|\mathrm{\mathcal{H}}\left(s-1, q^2\right)\right| \frac{q^{2s-2}-1}{q^2-1},
     \end{equation*}
     and
     \begin{equation*}
         \sum b_{\Psi_i}(b_{\Psi_i}-1)= \left|\mathrm{\mathcal{H}}\left(s-1, q^2\right)\right| \left(\left|\mathrm{\mathcal{H}}\left(s-1, q^2\right)\right| -1\right) \frac{q^{2s-4}-1}{q^2-1}.
     \end{equation*} 
     Now we have 
     \begin{equation*}
         \sum \left(b_{\Psi_i} - \left|\mathrm{\mathcal{H}}\left(s-2, q^2\right)\right| \right)\left(b_{\Psi_i} - \left|p\mathrm{\mathcal{H}}\left(s-3, q^2\right)\right|\right) = 0.
     \end{equation*}
     Since, $\left|\mathrm{\mathcal{H}}\left(s-2, q^2\right)\right| \leqslant b_\Psi \leqslant \left| p\mathrm{ \mathcal{H}}\left(s-3, q^2\right) \right|$ 
     , we obtain that $b_\Psi = \left|\mathrm{\mathcal{H}}\left(s-2, q^2\right)\right|$ or $b_\Psi = ~\left|p\mathrm{\mathcal{H}}\left(s-3, q^2\right)\right|$.\\
     Hence, every co-dimension $2$ subspace of $\mathrm{P G}\left(s, q^2\right)$ contains $\left|\mathcal H\left(s-2, q^2\right)\right|, \left| p\mathrm{\mathcal{H}}\left(s-3, q^2\right)\right|$ or $\left|\mathrm{L \mathcal{H}}\left(s-4, q^2\right)\right|$ black points.
      \end{proof}
     \vspace{1cm}

    \noindent $\mathbf{Proof~ of ~Theorem~ \ref{main thm}}$
    From Corollary \ref{black-hyp} and Lemma \ref{black-hyp not}, every hyperplane of $\mathrm{P G}\left(s, q^2\right)$ contains $\left|\mathrm{\mathcal{H}}\left(s-1, q^2\right)\right|$ or $\left| p\mathrm{\mathcal{H}}\left(s-2, q^2\right)\right|$
    black points and by Lemma \ref{codim}, every co-dimension $2$ subspace of $\mathrm{P G}\left(s, q^2\right)$ contains $\left|\mathcal H\left(s-2, q^2\right)\right|, \left| p\mathrm{\mathcal{H}}\left(s-3, q^2\right)\right|$ or $\left|\mathrm{L \mathcal{H}}\left(s-4, q^2\right)\right|$ black points. Hence, by Proposition \ref{use}, the collection of black points constitute a non-singular hermitian variety $\mathcal{H}\left(s, q^2\right)$.
   
    \vspace{1cm}
    $\mathbf{Acknowledgement}$: The first author is supported by University Grants Commission(UGC) Junior Research Fellowship(JRF), NTA Ref.No: 211610115326. The second author is supported by project no. SRG/2021/000603 of Science and Engineering Research Board (SERB), Department of Science and Technology, Government of India.

\vspace{3cm}
    \noindent $\mathbf{Address:}$\\
    Stuti Mohanty (Email: stutimohanty034@gmail.com)\\
    Bikramaditya Sahu (Email: sahuba@nitrkl.ac.in)\\
    Department of Mathematics, National Institute of Technology\\
    Rourkela - 769008, Odisha, India.

\end{document}